\begin{document}
\newtheorem{theorem}{Theorem}[section] 
\newtheorem{lemma}[theorem]{Lemma}     
\newtheorem{corollary}[theorem]{Corollary}
\newtheorem{proposition}[theorem]{Proposition}


\newcommand{\p}{p} 
\newcommand{\s}{s} 
\newcommand{\q}{q} 
\newcommand{\F}[1][\q]{\mathbb{F}_{#1}} 
\newcommand{\K}{\mathbb{K}} 
\newcommand{\Q}{\mathbb{Q}}
\newcommand{\Z}{\mathbb{Z}}
\newcommand{\cS}{\mathcal{S}}
\newcommand{\FX}[1][X]{\mathbb{F}_\q[#1]} 
\newcommand{\KX}[1][X]{\mathbb{K}[#1]} 
\newcommand{\modulo}{\bmod} 
\newcommand{\ff}[1][M]{
  \ifthenelse{\equal{#1}{M}}{f}{} 
  \ifthenelse{\equal{#1}{X}}{f(X)}{} 
  \ifthenelse{\equal{#1}{\root_{i}}}{f(\root_{i})}{} 
  \ifthenelse{\equal{#1}{\beta_{i}}}{f(\beta_{i})}{} 
  \ifthenelse{\equal{#1}{\hh}}{f \circ \hh}{} 
} 
\renewcommand{\gg}[1][M]{
  \ifthenelse{\equal{#1}{M}}{g}{} 
  \ifthenelse{\equal{#1}{X}}{g(X)}{} 
  \ifthenelse{\equal{#1}{\hh}}{g \circ \hh}{} 
} 
\newcommand{\hh}[1][M]{
  \ifthenelse{\equal{#1}{M}}{h}{} 
  \ifthenelse{\equal{#1}{X}}{h(X)}{} 
} 
\newcommand{\degree}[1]{
  \ifthenelse{\equal{#1}{\ff}}{d}{} 
  \ifthenelse{\equal{#1}{\gg}}{e}{} 
  \ifthenelse{\equal{#1}{\hh}}{d_{\hh}}{} 
  \ifthenelse{\equal{#1}{\f{\n}{X}}}{d_{\ff}^{\n}}{} 
  \ifthenelse{\equal{#1}{\f{\n-1}{X}}}{d_{\ff}^{\n-1}}{} 
} 
\newcommand{\n}{n} 
\newcommand{\f}[2]{f^{(#1)}(#2)} 
\newcommand{\g}[2]{g^{(#1)}(#2)} 
\newcommand{\coefficient}[2]{
  \ifthenelse{\equal{#2}{\ff}}{a_{#1}}{}
  \ifthenelse{\equal{#2}{\gg}}{b_{#1}}{}
  \ifthenelse{\equal{#2}{\hh}}{a_{#1}}{}
  \ifthenelse{\equal{#2}{\f{\n}{X}}}{\(a_{\degree{\ff}}\)^{\frac{\degree{\ff}^n-1}{\degree{\ff}-1}}}{}
  \ifthenelse{\equal{#2}{\f{\n-1}{X}}}{\(a_{\degree{\ff}}\)^{\frac{\degree{\ff}^{n-1}-1}{\degree{\ff}-1}}}{}
  \ifthenelse{\equal{#2}{\f{\n2}{X}}}{\(a_{\degree{\ff}}\)^{\frac{\degree{\ff}^{n-1}-1}{\degree{\ff}-1}}}{}
} 
\newcommand{\Cf}[1]{
  \ifthenelse{\equal{#1}{1}}{C_f}{C_{f^{#1}}}
  }
\newcommand{\principal}[1]{\coefficient{\degree{#1}}{#1}} 
\newcommand{\element}[1][\alpha]{#1} 
\newcommand{\Res}[2]{\mathrm{Res}\left(#1,#2\right)} 
\newcommand{\Disc}[1]{\mathrm{Disc}(#1)} 
\newcommand{\Tr}{\mathrm{Tr}}
\renewcommand{\root}{\gamma} 
\renewcommand{\(}{\left (} 
\renewcommand{\)}{\right )}
\newcommand{\fa}[2]{F_{#2}(a_0,\ldots,a_{\degree{#1}})}
\newcommand{\fy}[2]{F_{#2}(Y,a_1,\ldots,a_{\degree{#1}x})}
\def\Orb{\mathrm{Orb}(f)}
\def\Nm{\mathrm{Nm}}
\newcommand{\comm}[1]{\marginpar{%
\vskip-\baselineskip 
\raggedright\footnotesize
\itshape\hrule\smallskip#1\par\smallskip\hrule}}


\title[Stable Polynomials over Finite Fields]
 {Stable Polynomials over Finite Fields} 


\author[D. G\'omez-P\'erez]{Domingo G\'omez-P\'erez} 
\address{Department of Mathematics, University of Cantabria, Santander 39005, Spain}
\email{domingo.gomez@unican.es}
\author[A. P. Nicol\'as]{Alejandro P. Nicol\'as} 
\address{Departamento de Matem‡tica Aplicada, Universidad de Valladolid, Spain}
\email{anicolas@maf.uva.es}
\author[A. Ostafe]{Alina Ostafe} 
\address{Department of Computing, Macquarie University, Sydney NSW 2109, Australia}
\email{alina.ostafe@mq.edu.au}
\author[D. Sadornil]{Daniel Sadornil} 
\address{Department of Mathematics, University of Cantabria, Santander 39005, Spain}
\email{sadornild@unican.es}



\thanks{A. N. was supported by MTM2010-18370-C04-01, A.~O. was
supported by SNSF   Grant 133399 and D. S. was supported by
MTM2010-21580-C02-02 and MTM2010-16051.}

\maketitle

\begin{abstract}
We use the theory of resultants to study the stability of an arbitrary polynomial $f$
over a finite field $\F$, that is, the property of having all its iterates irreducible.
This result partially generalises the quadratic polynomial case described
by R. Jones and N. Boston. Moreover, for $p=3$, we show 
that certain polynomials of degree three are not stable. 
We also use the Weil bound for multiplicative character
sums to estimate the number of stable arbitrary polynomials
over finite fields of odd characteristic.
\end{abstract}

\section{Introduction}
For a polynomial $\ff$ of degree at least 2 and coefficients in a
field $\K$, we define the following sequence:
\begin{equation*}
  \f{0}{X}=X,\quad \f{\n}{X}=\f{\n-1}{\ff[X]},\ \n\ge 1.
\end{equation*}
A polynomial $\ff$ is \textit{stable} if $f^{(n)}$ is irreducible
over $\K$ for all $\n\ge 1$. 
In this article, $\K=\F$ is a finite field 
with $\q$ elements, where $\q=\p^{\s}$ and
$\p$ an odd prime.

Studying the stability of a polynomial is an exciting problem which has
attracted a lot of attention. However, only few results are known and 
the problem is far away from being well understood.

The simplest case, when the polynomial is quadratic, has been studied
in several works. For example, some results concerning the stability
over $\F$ and $\Q$ can be found in \cite{Ali,Ayad,Danielson,Jon,Jones09}. 
In particular, by~\cite[Proposition~2.3]{Jones09},
a quadratic monic polynomial $f \in \K[X]$ over a field $\K$ 
of odd characteristic and 
with the unique critical point $\gamma$, is stable
if the set
$$\{-f(\gamma)\} \cup \{f^{(n)}(\gamma)\mid n\ge 2\}$$ contains no squares.
In the case when
$\K=\mathbb{F}_q$ is a finite field of odd characteristic,
this property is also necessary.

In~\cite{GomezNicolas10} an
estimate of the number of stable quadratic polynomials over the
finite field $\F$ of odd characteristic is given, while in~\cite{ALOS} 
it is proved that almost
all monic quadratic polynomials $f\in\Z[X]$ are stable over $\Q$.
Furthermore, in~\cite{ALOS} it is shown that there are no stable quadratic
polynomials over finite fields of characteristic two. One might
expect that this is the case over any field of characteristic two,
which is not true as is also shown in~\cite{ALOS} where an
example of a stable quadratic polynomial over a function field of
characteristic two is given.

The goal of this paper is to characterize the set of stable
polynomials of arbitrary degree and to devise a test for checking
the stability of polynomials.

Our techniques come from  theory of resultants and they use the relation between 
irreducibility of polynomials and the properties of the discriminant of polynomials. 
Using these techniques, we partially generalize previous results 
known  for quadratic polynomials.

A test for stability of quadratic polynomials was given
in~\cite{Ostafe09}, where it was shown that checking the
stability of such polynomials can be done in time $q^{{3/4}+o(1)}$. 
As in ~\cite{Jones09}, for an arbitrary polynomial $f$ over $\mathbb{F}_q$, 
the set defined by $$\{\f{\n}{\root_1}\ldots\f{\n}{\root_{k}}\  \mid \ n\geq 1\ \},$$
where $\root_i$, $i=1,\ldots,k$, are the roots of the 
derivative of the polynomial $\ff$,
plays also an important role in checking the stability of $f$. 
In particular, we use techniques based on resultants of polynomials
together with the Stickelberger's theorem to prove our
results. 
We introduce analogues of the
orbit sets defined in~\cite{Jones09} for arbitrary degree $d\ge 2$ polynomials.
As in~\cite{Ostafe09}, 
we obtain a nontrivial estimate for the cardinality of these sets for polynomials with irreducible derivative. 
We also give an estimate for the
number of stable arbitrary polynomials which generalises the result
obtained in~\cite{GomezNicolas10} for quadratic stable polynomials.

The outline of the paper is the following: in Section~\ref{sec:preliminaries} 
we introduce the preliminaries necessary to understand the paper.
These include basic results about resultants 
and discriminants of polynomials. This section ends with the Stickelberger's result. 
Next, Section~\ref{sec:stabilityPolynomials}
is devoted to proving a necessary condition for the stability of a  polynomial. 
We define a set, which generalizes the orbit set for
a quadratic polynomial, and then we give an upper bound
on the number of elements of this set. 
Section~\ref{sec:nonExistence} gives  a new proof of the 
result that appeared in~\cite{ALOS} for cubic polynomials when the characteristic is 
equal to 3.
Finally, in Section~\ref{sec:numberStables} we give an estimate of the number of stable 
polynomials for any degree. 
For that, we relate the number of stable polynomials with estimates of 
certain multiplicative character sums. 

\section{Preliminaries}
\label{sec:preliminaries}
Before proceeding with the main results, it is necessary to introduce some 
concepts related
to commutative algebra. Let $\K$ be any field and let $\ff\in\KX$ be
a polynomial of degree $d$
with leading coefficient $\principal{\ff}$. The \textit{discriminant} of $\ff$,
denoted by $\Disc{\ff}$, is defined by
\begin{equation*}
  \Disc{\ff}=\principal{\ff}^{2d-2}\prod_{i<j}(\alpha_i-\alpha_j)^2,
\end{equation*}
where $\alpha_1,\ldots, \alpha_{d}$ are the roots of $\ff$ in
some extension of $\K$.
\medskip
\newline
It is widely known that  for any polynomial $\ff\in\KX$, its discriminant
is an element of the field $\K$.
Alternatively, it is possible to compute $\Disc{f}$ using resultants.
We can define the resultant of two polynomials $\ff$ and $\gg$
over $\K$ of degrees $d$ and $e$, respectively,
with leading coefficients $a_d$ and $b_e$, as
\begin{equation*}
  \Res{\ff}{\gg}=
  \principal{\ff}^{e}\principal{\gg}^{d}\prod (\alpha_i-\beta_j),
\end{equation*}
where $\alpha_i, \beta_j$ are the roots of $\ff$ and $\gg$, respectively.

Like the discriminant, the resultant belongs to $\K$.
In the following lemmas we summarize several known results about resultants without proofs. The
interested reader can find them in~\cite{Cox07,LN97}.
\begin{lemma}
  \label{lem:resultant_eval_root}
  Let $\ff,\gg\in\KX$ be two polynomials of degrees $d\ge 1$ and $e\ge 1$
  with leading coefficients $a_d$ and $b_e$, respectively. Let 
  $\beta_1,\ldots,\beta_{\degree{\gg}}$ be the roots of $\gg$ in an extension field of $\K$.
  Then,
  \begin{equation*}
  \Res{\ff}{\gg}=
   (-1)^{d\degree{\gg}}
  \principal{\gg}^{d}\prod_{i=1}^{\degree{\gg}} \ff[\beta_{i}].
  \end{equation*}
\end{lemma}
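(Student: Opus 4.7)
The plan is to derive the identity directly from the defining product formula for the resultant, using the factorization of $\ff$ over the splitting field to rewrite the double product as a product of evaluations of $\ff$ at the roots of $\gg$.

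First I would start from
\begin{equation*}
\Res{\ff}{\gg} = \principal{\ff}^{e}\,\principal{\gg}^{d}\prod_{i=1}^{d}\prod_{j=1}^{e}(\alpha_i - \beta_j),
\end{equation*}
where $\alpha_1,\ldots,\alpha_d$ are the roots of $\ff$ (in an extension of $\K$ large enough to contain all the roots of both $\ff$ and $\gg$). I would then reorder the product as a product over $j$ of $\prod_{i}(\alpha_i-\beta_j)$ and pull out a sign: each inner product of $d$ factors contributes $(-1)^d$ when we switch the order of subtraction, giving a total sign of $(-1)^{de}$.

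Next I would use the factorization $\ff[X] = \principal{\ff}\prod_{i=1}^{d}(X-\alpha_i)$, valid over the chosen extension, to evaluate at $X=\beta_j$ and obtain
\begin{equation*}
\ff[\beta_j] = \principal{\ff}\prod_{i=1}^{d}(\beta_j - \alpha_i),
\qquad\text{i.e.,}\qquad
\prod_{i=1}^{d}(\beta_j - \alpha_i) = \principal{\ff}^{-1}\,\ff[\beta_j].
\end{equation*}
Taking the product over $j=1,\ldots,e$ multiplies the prefactor into $\principal{\ff}^{-e}$, which cancels against the $\principal{\ff}^{e}$ coming from the definition. Collecting the remaining pieces gives the claimed formula
\begin{equation*}
\Res{\ff}{\gg} = (-1)^{de}\,\principal{\gg}^{d}\prod_{j=1}^{e}\ff[\beta_j].
\end{equation*}

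There is no real obstacle here: the only subtlety is bookkeeping of the sign $(-1)^{de}$ and checking that the powers of $\principal{\ff}$ cancel exactly, so the proof is essentially a one-line manipulation once the correct extension field is chosen. Since the result is an identity in that extension and both sides lie in $\K$, no further descent argument is needed.
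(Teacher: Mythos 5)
Your proof is correct and is the standard derivation from the paper's own product definition of the resultant: the sign bookkeeping $(-1)^{de}$ and the cancellation of $\principal{\ff}^{e}$ both check out. The paper states this lemma without proof (citing \cite{Cox07,LN97}), and your argument is exactly the canonical one those references give, so there is nothing to add.
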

The behaviour of the resultant with respect to the multiplication is given
by the next result.
\begin{lemma}
\label{lem:resultant_multiplication}
  Let $\K$ be any field. Let $\ff,\gg,\hh\in\KX$ be polynomials of degree greater than 1 and
  $a\in\K$.
  The following  hold:
  \begin{align*}
  &  \Res{\ff\gg}{\hh}=\Res{\ff}{\hh}\Res{\gg}{\hh},&\Res{ a\ff}{\gg}=a^{\degree{\gg}}\Res{\ff}{\gg},
  \end{align*}
 where $\deg g=e$.
\end{lemma}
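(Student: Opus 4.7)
The plan is to read both identities directly from the root-based definition of the resultant given just before the lemma, working in an algebraic closure $\overline{\K}$ of $\K$. I would write $\alpha_1,\dots,\alpha_d$ for the roots of $\ff$, $\alpha'_1,\dots,\alpha'_e$ for the roots of $\gg$, and $\beta_1,\dots,\beta_{m}$ (with $m=\deg\hh$) for the roots of $\hh$, each listed with multiplicity, and denote their leading coefficients by $\principal{\ff}$, $\principal{\gg}$, and $c$, respectively.

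For the first identity, $\ff\gg$ has degree $d+e$, leading coefficient $\principal{\ff}\principal{\gg}$, and multiset of roots equal to the disjoint union of those of $\ff$ and $\gg$. Substituting into the defining formula yields
\begin{equation*}
\Res{\ff\gg}{\hh}=(\principal{\ff}\principal{\gg})^{m} c^{d+e}\prod_{i,k}(\alpha_i-\beta_k)\prod_{j,k}(\alpha'_j-\beta_k),
\end{equation*}
which regroups as
\begin{equation*}
\bigl[\principal{\ff}^{m}c^{d}\prod_{i,k}(\alpha_i-\beta_k)\bigr]\bigl[\principal{\gg}^{m}c^{e}\prod_{j,k}(\alpha'_j-\beta_k)\bigr]=\Res{\ff}{\hh}\,\Res{\gg}{\hh}.
\end{equation*}

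For the second identity, replacing $\ff$ by $a\ff$ with $a\in\K^{\times}$ leaves the degree and the root multiset unchanged but multiplies the leading coefficient by $a$; hence only the factor $\principal{\ff}^{e}$ in the defining expression is affected, producing the extra factor $a^{e}=a^{\deg\gg}$, and if $a=0$ both sides vanish. There is no substantive obstacle here: the argument is a purely combinatorial rearrangement of products of root differences, with care needed only to pass to $\overline{\K}$ so that all roots exist and to count them with multiplicity. No hypothesis on $\K$ beyond being a field is used.
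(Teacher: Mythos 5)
The paper gives no proof of this lemma at all: it is listed among ``several known results about resultants without proofs,'' with the reader referred to the cited textbooks. Your direct verification from the root-product definition stated just above the lemma is correct and complete (including the correct regrouping of the factor $c^{d+e}$ and the observation that multiplying $\ff$ by $a$ changes only the leading coefficient), so it supplies a valid elementary argument where the paper supplies none.
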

The relation between $\Disc{\ff}$ and $\Res{\ff}{\ff'}$ is given by the next statement.
\begin{lemma}
  \label{lem:resultant_discriminant}
  Let $\K$ be any field and $\ff\in\KX$ be a polynomial of degree $d\geq
  2$ with leading coefficient $a_d$, non constant derivative $\ff'$ and $\deg \ff'=k\le d-1$. Then, we have the relation
  \[\Disc{\ff}= \Cf{1}\Res{\ff}{\ff '},\]
  \noindent where $\Cf{1} =(-1)^{\frac{d(d-1)}{2}}\principal{\ff}^{d-k-2}$.
\end{lemma}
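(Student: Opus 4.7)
The plan is to compute the resultant $\Res{f}{f'}$ in terms of the roots of $f$ and then match it against the discriminant formula given in the definition.

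First, I would factor $f(X) = a_d \prod_{i=1}^{d}(X-\alpha_i)$ over an algebraic closure and apply the product rule to get
\[
f'(\alpha_i) = a_d \prod_{j\neq i}(\alpha_i - \alpha_j), \qquad i=1,\ldots,d.
\]
Lemma~\ref{lem:resultant_eval_root} (in its dual form, obtained by evaluating the polynomial of larger degree at the roots of the polynomial of smaller degree, or equivalently by the symmetry $\Res{f}{f'}=(-1)^{dk}\Res{f'}{f}$) then gives
\[
\Res{f}{f'} = a_d^{\,k}\prod_{i=1}^{d} f'(\alpha_i),
\]
where $k=\deg f'$. Substituting the expression for $f'(\alpha_i)$ yields
\[
\Res{f}{f'} = a_d^{\,k+d}\prod_{i=1}^{d}\prod_{j\neq i}(\alpha_i-\alpha_j).
\]

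Next, I would reorganize the double product by pairing the factor $(\alpha_i-\alpha_j)$ with $(\alpha_j-\alpha_i)$ for $i<j$; each pair contributes $-(\alpha_i-\alpha_j)^{2}$, and there are $\binom{d}{2}=d(d-1)/2$ such pairs. Hence
\[
\prod_{i=1}^{d}\prod_{j\neq i}(\alpha_i-\alpha_j) = (-1)^{d(d-1)/2}\prod_{i<j}(\alpha_i-\alpha_j)^{2}.
\]
From the definition of the discriminant one has $\prod_{i<j}(\alpha_i-\alpha_j)^{2}=a_d^{-(2d-2)}\Disc{f}$, and plugging this in gives
\[
\Res{f}{f'} = (-1)^{d(d-1)/2}\,a_d^{\,k+d-(2d-2)}\,\Disc{f} = (-1)^{d(d-1)/2}\,a_d^{\,k-d+2}\,\Disc{f}.
\]
Multiplying both sides by $(-1)^{d(d-1)/2}a_d^{\,d-k-2}$ (an element of $\K$ since $a_d\neq 0$) isolates $\Disc{f}$ and produces exactly the claimed constant $\Cf{1}=(-1)^{d(d-1)/2}a_d^{\,d-k-2}$.

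The only mildly delicate point is the exponent $d-k-2$: in the generic case $k=d-1$ one recovers the classical formula $\Disc{f} = (-1)^{d(d-1)/2}a_d^{-1}\Res{f}{f'}$, but when the characteristic of $\K$ divides $d$ the derivative drops in degree and this extra power of $a_d$ absorbs the mismatch. I would therefore take care to apply Lemma~\ref{lem:resultant_eval_root} with the correct degree $k$ (rather than $d-1$) throughout, which is the main bookkeeping step — the rest is a routine symbolic manipulation.
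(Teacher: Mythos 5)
Your proof is correct. The paper does not prove this lemma at all --- it is listed among the ``known results about resultants without proofs'' with a pointer to \cite{Cox07,LN97} --- so there is nothing to compare against; your derivation is the standard one and it checks out: the identity $f'(\alpha_i)=a_d\prod_{j\neq i}(\alpha_i-\alpha_j)$ is a formal-derivative computation valid in any characteristic, the symmetry $\Res{\ff}{\ff'}=(-1)^{dk}\Res{\ff'}{\ff}$ follows at once from the paper's definition of the resultant, and your bookkeeping with the exponent $k+d-(2d-2)=k-d+2$ is exactly what produces the constant $(-1)^{d(d-1)/2}a_d^{\,d-k-2}$ in the degenerate case $k<d-1$ where the characteristic divides $d$.
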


One of the main tools used to prove our main result regarding the stability of arbitrary polynomials
is the Stickelberger's result~\cite{Stickelberger97},
which gives the parity of the number of distinct irreducible factors of a polynomial over a finite field of odd characteristic.

\begin{lemma}
\label{thm:Stickelberger}
  Suppose $\ff\in\FX$, $q$ odd, is a polynomial of degree $d\ge 2$ and is
  the product of $r$ pairwise
  distinct irreducible polynomials over $\F$.
 Then $r\equiv d\modulo 2$ if and only
  if $\Disc{\ff}$ is a square in $\F$.
\end{lemma}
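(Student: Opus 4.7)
The plan is to prove Stickelberger's lemma by a Galois-theoretic argument, exploiting the fact that $\sqrt{\Disc{\ff}}$ generates, at worst, a quadratic extension of $\F$, and that the Frobenius acts on a chosen square root via the sign of its induced permutation on the roots. The odd characteristic hypothesis enters exactly once: to separate $\delta$ from $-\delta$.

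First, I would set up the roots. Write $\ff = \ff_1 \cdots \ff_r$ with the $\ff_i$ pairwise distinct irreducibles of degrees $d_1,\ldots,d_r$, where $d_1+\cdots+d_r=d$. Since $\F$ is perfect, each $\ff_i$ is separable, and distinct irreducibles share no roots; hence the $d$ roots $\alpha_1,\ldots,\alpha_d$ of $\ff$ in $\overline{\F}$ are pairwise distinct. In particular the element
\[
  \delta \;=\; \principal{\ff}^{\,d-1}\prod_{i<j}(\alpha_i-\alpha_j)
\]
is nonzero, and by the definition of the discriminant one has $\Disc{\ff}=\delta^2$. Since $\Disc{\ff}\in\F$, it follows that $\delta$ lies in at worst a quadratic extension of $\F$, and, because $q$ is odd so $\delta\neq-\delta$, the discriminant is a square in $\F$ if and only if $\delta\in\F$.

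Second, I would translate ``$\delta\in\F$'' into a statement about Frobenius. Let $\phi:x\mapsto x^q$ be the Frobenius automorphism of $\overline{\F}$ over $\F$. It fixes the coefficients of $\ff$, hence permutes the roots $\alpha_1,\ldots,\alpha_d$ by some permutation $\sigma\in S_d$. The orbits of $\sigma$ are exactly the root sets of the irreducible factors, so $\sigma$ is a product of $r$ disjoint cycles of lengths $d_1,\ldots,d_r$. Applying $\phi$ to $\delta$ and reindexing the product,
\[
  \phi(\delta) \;=\; \principal{\ff}^{\,d-1}\prod_{i<j}(\alpha_{\sigma(i)}-\alpha_{\sigma(j)}) \;=\; \mathrm{sgn}(\sigma)\,\delta,
\]
since permuting the factors of $\prod_{i<j}(\alpha_i-\alpha_j)$ by $\sigma$ introduces exactly the sign of $\sigma$. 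Thus $\delta\in\F$ iff $\mathrm{sgn}(\sigma)=+1$.

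Third, I would compute $\mathrm{sgn}(\sigma)$ in terms of $r$ and $d$. A cycle of length $d_i$ has sign $(-1)^{d_i-1}$, so
\[
  \mathrm{sgn}(\sigma) \;=\; \prod_{i=1}^{r}(-1)^{d_i-1} \;=\; (-1)^{d-r}.
\]
Combining the three steps: $\Disc{\ff}$ is a square in $\F$ iff $\delta\in\F$ iff $(-1)^{d-r}=1$ iff $d\equiv r \pmod 2$, which is the claim.

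The only delicate point is the sign computation in the second step, i.e.\ verifying that the sign picked up by reindexing the Vandermonde-style product $\prod_{i<j}(\alpha_i-\alpha_j)$ is exactly $\mathrm{sgn}(\sigma)$; this is a standard identity for the alternating character of $S_d$ and is the cleanest way to link the cycle structure (hence $r$) to the square-class of $\Disc{\ff}$. Everything else is bookkeeping; the use of $q$ odd is isolated to the step distinguishing $\delta$ from $-\delta$, and the assumption of pairwise distinct irreducible factors is used only to guarantee $\delta\neq 0$.
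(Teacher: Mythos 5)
Your proof is correct. Note that the paper does not actually prove this lemma: it is quoted as Stickelberger's classical result with a citation to the 1897 paper, so there is no in-text argument to compare against. Your Frobenius-sign argument is the standard proof of that classical theorem, and all the essential points are handled correctly — the pairwise distinct irreducible factors over the perfect field $\F$ give $\delta\neq 0$; the Frobenius orbits on the roots are exactly the root sets of the irreducible factors, so $\sigma$ has $r$ cycles and $\mathrm{sgn}(\sigma)=(-1)^{d-r}$; and the hypothesis that $q$ is odd is used (twice, really: once to get $\Disc{\ff}$ square in $\F$ iff $\delta\in\F$, and once to conclude $\phi(\delta)=\delta$ iff $\mathrm{sgn}(\sigma)=+1$), exactly as you isolate it.
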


\section{Stability of arbitrary polynomials}
\label{sec:stabilityPolynomials}
In this section we give a necessary condition for the  stability of arbitrary
polynomials. For this purpose,  we use the
following general result known as Capelli's Lemma, see~\cite{FS}.
\begin{lemma}
\label{lem:Capelli} Let $\K$ be a field, $f,g\in\KX$, and let
$\beta\in\overline{\K}$ be any root of $g$. Then $g(f)$ is
irreducible over $\K$ if and only if both $g$ is irreducible over
$\K$ and $f-\beta$ is irreducible over $\K(\beta)$.
\end{lemma}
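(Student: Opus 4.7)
The plan is to analyse the factorization
\[ g(f(X)) = c \prod_{i=1}^{e} (f(X) - \beta_i) \]
in $\overline{\K}[X]$, where $c$ is the leading coefficient of $g$, $e = \deg g$, and $\beta_1,\dots,\beta_e$ are the roots of $g$ in $\overline{\K}$ (with $\beta = \beta_1$ say). Both directions will follow from degree comparisons along the tower $\K \subseteq \K(\beta) \subseteq \K(\alpha)$, where $\alpha\in\overline{\K}$ is any root of $f(X)-\beta$.

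For the ``$\Leftarrow$'' direction, I would fix such an $\alpha$; then $f(\alpha)=\beta$, so $g(f(\alpha))=g(\beta)=0$ and $\alpha$ is a root of $g(f)$. Irreducibility of $g$ over $\K$ gives $[\K(\beta):\K]=\deg g$, and irreducibility of $f(X)-\beta$ over $\K(\beta)$ gives $[\K(\alpha):\K(\beta)]=\deg f$. Multiplicativity of degrees yields
\[ [\K(\alpha):\K] = \deg f \cdot \deg g = \deg\bigl(g(f)\bigr), \]
so the minimal polynomial of $\alpha$ over $\K$ has degree $\deg(g(f))$. Since this minimal polynomial divides $g(f)$, the two must agree up to a constant, and $g(f)$ is irreducible.

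For the ``$\Rightarrow$'' direction I would argue contrapositively. If $g=g_1 g_2$ factors nontrivially in $\K[X]$, then $g(f)=g_1(f)\,g_2(f)$ is an immediate nontrivial factorization of $g(f)$ in $\K[X]$. Otherwise, assume $g$ is irreducible but $f(X)-\beta = h_1(X)\,h_2(X)$ with $\deg h_1, \deg h_2 \ge 1$ in $\K(\beta)[X]$. Irreducibility of $g$ provides, in the Galois closure of $\K(\beta)/\K$, embeddings $\sigma_1,\dots,\sigma_e$ of $\K(\beta)$ into $\overline{\K}$ sending $\beta$ to each $\beta_i$; applying them coefficient-wise to the assumed factorization gives $f(X)-\beta_i = \sigma_i(h_1)(X)\cdot\sigma_i(h_2)(X)$. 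Taking the product over $i$,
\[ g(f) = c\cdot\Big(\prod_{i=1}^{e}\sigma_i(h_1)\Big)\cdot\Big(\prod_{i=1}^{e}\sigma_i(h_2)\Big), \]
and each bracketed product is permuted by the Galois action, hence fixed, so it lies in $\K[X]$ and has positive degree. This contradicts the irreducibility of $g(f)$.

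The main obstacle is to justify rigorously that these ``norm-like'' products $\prod_i \sigma_i(h_j)$ are well-defined and lie in $\K[X]$: this requires the roots $\beta_i$ to be distinct, i.e.\ $g$ separable, so that the embeddings $\sigma_i$ exist and are uniquely determined. In the setting of this paper, where $\K=\F$, every irreducible polynomial is automatically separable and this step is free; in full generality one either restricts to separable $g$ or passes to the splitting field with appropriate multiplicities. The positivity of $\deg \prod_i \sigma_i(h_j)$ is immediate from the nontriviality of the assumed factorization $h_1 h_2$, since each $\sigma_i(h_j)$ has the same degree as $h_j$.
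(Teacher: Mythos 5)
The paper gives no proof of this lemma at all --- it is quoted directly from Fein and Schacher~\cite{FS} --- so there is nothing internal to compare against; your argument is the standard one and is essentially correct. The ``$\Leftarrow$'' direction, via the tower $\K\subseteq\K(\beta)\subseteq\K(\alpha)$ and the count $[\K(\alpha):\K]=\deg f\cdot\deg g=\deg (g(f))$, is complete. In the ``$\Rightarrow$'' direction the reduction to ``$g$ irreducible'' is immediate, but, as you yourself flag, the norm-type products $\prod_i\sigma_i(h_j)$ require $g$ to be separable so that the $e$ distinct embeddings exist and the product is Galois-invariant; over $\F$ this is automatic, but the lemma is stated for an arbitrary field $\K$, so as written this direction has a (repairable) gap in full generality. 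You can close it and shorten the proof by running your own degree count in reverse instead of building norms: if $f-\beta=h_1h_2$ nontrivially over $\K(\beta)$, pick a root $\alpha$ of $h_1$ in $\overline{\K}$; since $\beta=f(\alpha)\in\K(\alpha)$ one gets $[\K(\alpha):\K]=[\K(\alpha):\K(\beta)]\,[\K(\beta):\K]\le \deg h_1\cdot\deg g<\deg f\cdot\deg g$, whereas irreducibility of $g(f)$, which vanishes at $\alpha$, forces $[\K(\alpha):\K]=\deg(g(f))=\deg f\cdot\deg g$ --- a contradiction valid in any characteristic, with no separability hypothesis and no passage to a Galois closure.
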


We prove now one of the main results about the stability of an arbitrary polynomial.
We note that our result partially generalises the quadratic polynomial
case presented in~\cite{Jones09} which is known to be necessary
and sufficient over finite fields.

\begin{theorem}
  \label{thm:criterium}
  Let $q=p^s$, $p$ be an odd prime, and $\ff\in\FX$ a stable polynomial of degree $d\geq
  2$ with leading coefficient $a_d$, non constant derivative $\ff'$ and $\deg \ff'=k\le d-1$. Then the following hold:
  \begin{enumerate}

   \item if $d$ is even, then $\Disc{f}$
     and $\principal{\ff}^{k}\Res{f^{(n)}}{\ff '}$, $n\ge 2$, are nonsquares in $\F$;
   \item if $d$ is odd, then $\Disc{f}$
     and $(-1)^{\frac{d-1}{2}}\principal{\ff}^{(n-1)k+1}\Res{f^{(n)}}{\ff '}$, $n\ge 2$, are squares in $\F$.
   \end{enumerate}
\end{theorem}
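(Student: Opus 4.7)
The plan is to apply Stickelberger's theorem (Lemma~\ref{thm:Stickelberger}) to every iterate $f^{(n)}$ and then translate the resulting condition on $\Disc{f^{(n)}}$ into one on $\Res{f^{(n)}}{f'}$ via Lemma~\ref{lem:resultant_discriminant}. Stability of $f$ forces each $f^{(n)}$ to be irreducible, so the number of distinct irreducible factors of $f^{(n)}$ is $r=1$ while $\deg f^{(n)} = d^n$. Hence $\Disc{f^{(n)}}$ is a square in $\F$ if and only if $d^n$ is odd, i.e.\ if and only if $d$ is odd. Specializing to $n=1$ already yields the $\Disc{f}$ half of both statements.

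For the $\Res{f^{(n)}}{f'}$ half ($n\geq 2$), I would use the chain rule identity
\[(f^{(n)})' = \prod_{i=0}^{n-1} f'(f^{(i)})\]
together with Lemma~\ref{lem:resultant_multiplication} to factor
\[\Res{f^{(n)}}{(f^{(n)})'} = \prod_{i=0}^{n-1} \Res{f^{(n)}}{f'(f^{(i)})}.\]
Because $f^{(i)}$ is a $d^i$-to-one map from the roots of $f^{(n)}$ onto the roots of $f^{(n-i)}$, Lemma~\ref{lem:resultant_eval_root} reduces each factor to $a_d^{kd^nD_i}\,R_{n-i}^{d^i}$, where $D_i=(d^i-1)/(d-1)$ and $R_j := \Res{f^{(j)}}{f'}$. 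Feeding this into Lemma~\ref{lem:resultant_discriminant} produces an explicit identity expressing $\Disc{f^{(n)}}$ as the product of the sign $(-1)^{d^n(d^n-1)/2}$, a power of $a_d$, and the factor $R_n\prod_{i=1}^{n-1} R_{n-i}^{d^i}$.

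The two parts of the theorem then split naturally modulo $(\F^\times)^2$. When $d$ is even, every $R_{n-i}^{d^i}$ for $i\geq 1$ is automatically a square, and a parity check of the $a_d$-exponent (using $d^n\equiv 0\pmod 4$ for $n\geq 2$) collapses the identity to $\Disc{f^{(n)}} \equiv a_d^{k}\,R_n \pmod{(\F^\times)^2}$, from which part~(1) follows by Stickelberger. When $d$ is odd, each $R_{n-i}^{d^i}\equiv R_{n-i}$ modulo squares, and I would induct on $n$: using that $\Disc{f^{(j)}}$ is a square for all $j\geq 1$ together with the inductive hypothesis on $R_j$ for $2\leq j<n$, every $R_{n-i}$ with $i\geq 1$ is replaced by its known class modulo squares, and the accumulated $a_d$-exponents and signs combine to leave exactly $(-1)^{(d-1)/2} a_d^{(n-1)k+1}\,R_n$, which must therefore be a square.

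The main obstacle is the exponent bookkeeping: tracking the $a_d$-exponent, the sign $(-1)^{d^n(d^n-1)/2}$, and the telescoping behavior of $\prod R_{n-i}^{d^i}$ under the induction. The parity congruence $D_n\equiv n\pmod 2$ for $d$ odd keeps the arithmetic tractable, but extracting precisely the exponent $(n-1)k+1$ from the cancellation in the odd case is where the verification is most delicate.
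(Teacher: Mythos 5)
Your proposal is correct in outline, but it takes a genuinely different route from the paper. The paper does not apply Stickelberger to $f^{(n)}$ over $\F$ at all: it uses Capelli's lemma to reduce to the irreducibility of $f-\alpha$ over $\mathbb{F}_{q^{d^{n-1}}}$ for $\alpha$ a root of $f^{(n-1)}$, applies Stickelberger to this degree-$d$ polynomial over the big field, and then pushes the square/nonsquare information down to $\F$ via the norm map, using $\prod_{\alpha}(f-\alpha)=f^{(n-1)}(f)/A$ to assemble the product of discriminants into $A^{-k}C_f^{d^{n-1}}\Res{f^{(n)}}{f'}$. That treats each $n$ independently and needs no induction. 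Your route instead extracts the class of $\Disc{f^{(n)}}$ directly over $\F$ and converts it via the chain-rule factorization $(f^{(n)})'=\prod_{i=0}^{n-1}f'(f^{(i)})$ into the identity $\Res{f^{(n)}}{(f^{(n)})'}=a_d^{kd^n\sum_i D_i}\prod_{i=0}^{n-1}R_{n-i}^{d^i}$; I checked your exponent $a_d^{kd^nD_i}$ for each factor and it is right (the fiber-counting step silently uses separability of the iterates, which follows from their irreducibility, and multiplicativity of the resultant in the \emph{second} argument needs the antisymmetry $\Res{f}{g}=(-1)^{de}\Res{g}{f}$ on top of Lemma~\ref{lem:resultant_multiplication}, but both are routine). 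The trade-off: your identity relating $\Disc{f^{(n)}}$ to all of $R_1,\dots,R_n$ is of independent interest, but it forces the telescoping induction in the odd case, whereas the paper's norm argument gets each $R_n$ in one step. One concrete caution on the "delicate" final step: carrying out your telescoping gives the exponent $nk+1$ on $a_d$, not $(n-1)k+1$; these agree modulo $2$ exactly when $k$ is even, which is automatic when $p\nmid d$ (then $k=d-1$ with $d$ odd). The paper's own norm computation also lands on $nk+1$ (namely $C_f\cdot A^{-k}\equiv(-1)^{(d-1)/2}a_d^{(k+1)+k(n-1)}$ modulo squares), so this discrepancy with the displayed constant in the theorem is shared by both arguments and is not a defect of your method.
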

\begin{proof}
Let $\ff\in\FX$ be a stable polynomial. We assume first that
$d$ is even. We have that $f^{(n)}$ is irreducible for
any $n$, and thus, by  Capelli's Lemma~\ref{lem:Capelli}, we know
that  $f-\alpha$ is irreducible over
$\mathbb{F}_{q^{d^{n-1}}}$, where $\alpha$ is a root of $f^{(n-1)}$. By
Lemma~\ref{thm:Stickelberger} this means that $\Disc{f-\alpha}$ is
a nonsquare in $\mathbb{F}_{q^{d^{n-1}}}$. Now, taking the norm over $\F[q]$
and using Lemma~\ref{lem:resultant_discriminant}, we get
\begin{equation*}
\begin{split}
  \Nm_{q^{d^{n-1}}|q} &\Disc{f-\alpha}\\
  &=\prod_{\substack{\alpha\in\mathbb{F}_{q^{d^{n-1}}}\\ f^{(n-1)}(\alpha)=0}} \Disc{f-\alpha}=\prod_{\substack{\alpha\in\mathbb{F}_{q^{d^{n-1}}}\\ f^{(n-1)}(\alpha)=0}} C_{f} \Res{f-\alpha}{f'}\\
  &=C_f^{d^{n-1}} \Res{\prod_{\substack{\alpha\in\mathbb{F}_{q^{d^{n-1}}}\\ f^{(n-1)}(\alpha)=0}} (f-\alpha)}{f'}\\
  &=C_f^{d^{n-1}}\Res{\frac{f^{(n-1)}(f)}{A}}{f'}=A^{-k}C_f^{d^{n-1}}\Res{f^{(n)}}{f'},
  \end{split}
\end{equation*}
where $C_f$ is defined by Lemma~\ref{lem:resultant_discriminant}, $A$ is the leading coefficient of
 $f^{(n-1)}$ and
$\Nm_{q^{d^{(n-1)}}|q}$ is the norm map from $\mathbb{F}_{q^{d^{n-1}}}$
to $\mathbb{F}_q$.

As the norm $\Nm_{q^{d^{n-1}}|q}$ maps nonsquares to nonsquares,
we obtain that $ A^{-k} C_f^{d^{(n-1)}} \Res{f^{(n)}}{f'}$ is a nonsquare, and  taking into account
that $A=a_{d}^{\frac{d^{n}-1}{d-1}}$ and 
the parity of the exponents involved,  the result follows. The case of odd $d$ can be treated in a similar way.
\end{proof}

Theorem~\ref{thm:criterium}
 is interesting because it gives a method for testing the
stability of a polynomial. Lemma~\ref{lem:resultant_eval_root} says that the
resultant is just the evaluation of $f^{(n)}$ in the roots of $\ff '$ multiplied by some constants. Taking into account this fact,
the quadratic character of $\principal{\ff}$ and the exponents
which are involved in Theorem \ref{thm:criterium}, we have the following characterisation.

\begin{corollary}\label{cor:condition}
 Let $q=p^s$, $p$ an odd prime, and $\ff\in\FX$ a stable polynomial 
 of degree $d\geq  2$ with leading coefficient $a_d$, non constant derivative 
 $\ff'$, $\deg \ff'=k\le d-1$ and $a_{k+1}$ the coefficient of $X^{k+1}$ in $f$. 
 Let $\gamma_i$, $i=1,\ldots,k,$ be the roots of the derivative
 $\ff'$. Then the following hold:
 \begin{enumerate}
 \item if $d$ is even, then
   \begin{equation}\label{eq:even orbit}
     \cS_1=
     \left\{\principal{\ff}^k  \prod_{i=1}^{k} \f{\n}{\gamma_i}\  \mid \ n>1\right\}
     \ \bigcup\ \left\{\ (-1)^\frac{d}{2}\principal{\ff}^k\prod_{i=1}^{k}
       \ff({\gamma_i})\ \right\}
   \end{equation} 
   contains only nonsquares in $\F$;
 \item if $d$ is odd, then
   \begin{equation}\label{eq:odd orbit}
     \cS_2=\left\{\ (-1)^{\frac{(d-1)}{2}+k}(k+1)a_{k+1}
       \principal{\ff}^{(n-1)k+1}\prod_{i=1}^{k} \f{\n}{\gamma_i}\  \mid \ n\ge1\right\}
   \end{equation} 
   contains only  squares in $\F$.
 \end{enumerate}
\end{corollary}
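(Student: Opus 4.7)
The plan is to translate the resultant-based criterion of Theorem~\ref{thm:criterium} into the product-of-evaluations language of $\cS_1$ and $\cS_2$ by invoking Lemma~\ref{lem:resultant_eval_root}. Since $f'$ has leading coefficient $(k+1)a_{k+1}$ and roots $\gamma_1,\ldots,\gamma_k$, that lemma gives
\[
\Res{f^{(n)}}{f'} \;=\; (-1)^{d^n k}\,\bigl((k+1)a_{k+1}\bigr)^{d^n}\prod_{i=1}^{k} f^{(n)}(\gamma_i),
\]
while combining it with Lemma~\ref{lem:resultant_discriminant} expresses $\Disc f$ as the same type of product with the extra prefactor $(-1)^{d(d-1)/2}a_d^{d-k-2}$. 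Substituting each of these into the criteria of Theorem~\ref{thm:criterium} will reduce the claim to a parity bookkeeping of which constants are forced to be squares in $\F$.

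In the case $d$ even, I would first observe that $d^{n}$ is even for every $n\ge 1$, so that $\bigl((k+1)a_{k+1}\bigr)^{d^n}$ and the sign $(-1)^{d^{n}k}$ are automatically squares and drop out. For $n\ge 2$, Theorem~\ref{thm:criterium}(1) then yields immediately that $a_d^{k}\prod_{i} f^{(n)}(\gamma_i)$ is a nonsquare, which accounts for the first part of $\cS_1$. The $n=1$ element of $\cS_1$ is read off separately from the nonsquareness of $\Disc f$: using $(-1)^{d(d-1)/2}=(-1)^{d/2}$ for $d$ even and noting that $a_d^{d-k-2}$ and $a_d^{k}$ differ by the even power $a_d^{d-2k-2}$, the factor $(-1)^{d/2}$ appears naturally, matching the distinguished element in~\eqref{eq:even orbit}.

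In the case $d$ odd, $d^{n}$ is odd, so $\bigl((k+1)a_{k+1}\bigr)^{d^n}$ has the same square class as $(k+1)a_{k+1}$ and $(-1)^{d^{n}k}=(-1)^{k}$. For $n\ge 2$, substitution into Theorem~\ref{thm:criterium}(2) will directly produce the claimed square condition on the elements of~\eqref{eq:odd orbit}. The $n=1$ element will again be recovered via $\Disc f$, using $(-1)^{d(d-1)/2}=(-1)^{(d-1)/2}$ for $d$ odd and comparing the exponent of $a_d$ modulo $2$.

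The main obstacle is pure parity bookkeeping: I will need to keep track simultaneously of the exponents $(d-1)/2$, $d(d-1)/2$, $d-k-2$, $(n-1)k+1$ and $d^{n}$, and verify in each case that the surplus factors of $a_d$ and of $(k+1)a_{k+1}$ can be absorbed into squares. The only conceptual subtlety is the $n=1$ boundary case, which lies outside the range of Theorem~\ref{thm:criterium} and must be routed separately through Lemma~\ref{lem:resultant_discriminant} and the discriminant condition; no new idea beyond Lemmas~\ref{lem:resultant_eval_root} and~\ref{lem:resultant_discriminant} is required.
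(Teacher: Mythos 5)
Your proposal is correct and follows essentially the same route as the paper, whose entire proof of this corollary is the one-line remark that it follows from Theorem~\ref{thm:criterium} together with Lemma~\ref{lem:resultant_eval_root}. The explicit substitution $\mathrm{Res}(f^{(n)},f')=(-1)^{d^nk}\bigl((k+1)a_{k+1}\bigr)^{d^n}\prod_{i=1}^{k}f^{(n)}(\gamma_i)$ and the parity bookkeeping you outline are precisely the details the paper leaves implicit.
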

\begin{proof}
The result follows directly from Theorem~\ref{thm:criterium}
and Lemma~\ref{lem:resultant_eval_root}.
\end{proof}

We note that the converse
of Corollary~\ref{cor:condition} is not true. Indeed, take
any $d$ with $\gcd(d,\q-1)=\gcd(d,\p)=1$, $\F$ an extension
of even degree of $\F[\p]$ and $a_0$ a quadratic residue in $\F$. 
Let us consider the polynomial
$\ff{(X)}=(X-a_0)^{d}+a_0\in\FX$. It is straightforward to see that
$\f{n}{X}=(X-a_0)^{d^n}+a_0$ and that the set~\eqref{eq:odd orbit} is
\begin{equation*}
\{(-1)^{\frac{d-1}{2}}\,d\, a_0^{d-1}\}.
\end{equation*}

We note that the polynomial $\ff$ is reducible. 
Indeed, let the integer $1\le e\le q-1$ be such that $ed=1 \pmod {q-1}$. 
Then $(a_0^e)^d=a_0$, and thus $-a_0^e+a_0$ is a root of $f$. 
On the other hand, since $-1$ and $d$ are squares in
$\F$ because both elements belong to $\F[\p]$ and $\F$ is an
extension of even degree, the set~\eqref{eq:odd orbit}
contains only squares. 

We finish this section by showing that, when the derivative $f'$ of the stable polynomial $f$ is irreducible,
the sets~\eqref{eq:even orbit} and~\eqref{eq:odd orbit} are  defined
by  a short sequence of initial elements.
The proof follows exactly the same lines as in the proof of~\cite[Theorem 1]{Ostafe09}. Indeed, assume $\deg f'=k$ and $\gamma_1,\ldots,\gamma_k\in\mathbb{F}_{q^k}$ are the roots of $f'$. Using Corollary~\ref{cor:condition} we see that the sets ~\eqref{eq:even orbit} and~\eqref{eq:odd orbit} contain only nonsquares and squares, respectively, and thus, the problem reduces to the cases when  $\f{\n}{\gamma_1}\ldots\f{\n}{\gamma_k}$ are either all squares or all nonsquares for any $n\ge 1$. It is clear that, when $f'$ is irreducible, taking into account that
$\gamma_i=\gamma_1^{q^{i}}$, $i=1,\ldots,k-1$, we get for every $1\le n\le N$,
\begin{equation*}
\begin{split}
\f{\n}{\gamma_1}\ldots\f{\n}{\gamma_k} &=
\f{\n}{\gamma_1}\ldots\f{\n}{\gamma_1^{\q^{k-1}}}\\
&=\f{\n}{\gamma_1}\ldots\f{\n}{\gamma_1}^{\q^{k-1}}\ =
\Nm_{q^{k}|q}\f{\n}{\gamma_1}.
\end{split}
\end{equation*}
Applying now the same technique with multiplicative character sums as in~\cite[Theorem 1]{Ostafe09} (as the argument does not depend on the degree of the polynomial $f$), we obtain the following estimate:
\begin{theorem}
\label{thm:UB} For any odd $q$ and  any stable polynomial
$f \in\FX$ with irreducible derivative $f'$, $\deg f'=k$, there exists
$$
N= O\(q^{3k/4}\)
$$ such that  for  the sets~\eqref{eq:even orbit} and~\eqref{eq:odd orbit}
we have
\begin{equation*}
\begin{split}
  \cS_1&= \left\{\principal{\ff}^k  \prod_{i=1}^{k} \f{\n}{\gamma_i}\ \mid \ 1< n \le N\right\}\
\bigcup\ \left\{\
(-1)^\frac{d}{2}\principal{\ff}^k\prod_{i=1}^{k}
\ff({\gamma_i})\right\};\\
\cS_2& =\left\{\ (-1)^{\frac{(d-1)}{2}+k}(k+1)a_{k+1}\principal{\ff}^{(n-1)k+1}\prod_{i=1}^{k} \f{\n}{\gamma_i}\  \mid \  1\le n \le N\right\}.
\end{split}
\end{equation*}
 \end{theorem}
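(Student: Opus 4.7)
The plan is to reduce the problem, via irreducibility of $f'$, to a statement about a single multiplicative character being constant along the forward $f$-orbit of one point in $\F[q^k]$, and then transplant the character-sum argument of~\cite[Theorem~1]{Ostafe09} with only a change of base field.

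First, because $f'\in\FX$ is irreducible of degree $k$, its roots lie in $\F[q^k]$ and form a single Frobenius orbit $\gamma_i=\gamma_1^{q^{i-1}}$. Since $f\in\FX$, Frobenius commutes with $f$-iteration, and we recover the identity already displayed above the theorem,
\begin{equation*}
  \prod_{i=1}^{k}\f{\n}{\gamma_i} \;=\; \prod_{i=0}^{k-1}\bigl(\f{\n}{\gamma_1}\bigr)^{q^{i}} \;=\; \Nm_{q^{k}|q}\bigl(\f{\n}{\gamma_1}\bigr),
\end{equation*}
so each element of $\cS_1\cup\cS_2$ is an explicit scalar multiple of a single norm. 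Letting $\psi$ be the quadratic character of $\F$ and $\chi:=\psi\circ\Nm_{q^{k}|q}$, a nontrivial multiplicative character of $\F[q^k]^{\ast}$, Corollary~\ref{cor:condition} translates stability of $f$ into the statement that $\chi(\f{\n}{\gamma_1})=\varepsilon_{n}\in\{\pm 1\}$ for every $n\ge 1$, where $\varepsilon_n$ is a sign read off from the constant prefactors in~\eqref{eq:even orbit} and~\eqref{eq:odd orbit}; it depends on $n$ only through $\psi(a_d^{(n-1)k+1})$ and is therefore periodic in $n$ of period at most $2$. Absorbing this sign into $\chi$ by a twist, one obtains that $\chi$ is genuinely constant on the forward $f$-orbit of $\gamma_1$ in $\F[q^k]$.

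Second, invoke the proof of~\cite[Theorem~1]{Ostafe09} \emph{mutatis mutandis}, with $\F$ replaced by $\F[q^k]$. That proof combines the Weil bound for multiplicative character sums on the curves cut out by $\f{m}{X}-Y$ with a short bilinear-sums step; its only inputs are the iteration identity $\alpha_{n+1}=f(\alpha_n)$, the existence of a nontrivial multiplicative character that is constant on the orbit, and the separability of the iterates. The exponent $3/4$ arises purely from balancing the Weil bound against the orbit length, and $\deg f$ enters only through implicit constants. Applied over $\F[q^k]$ in place of $\F$, the argument yields $N=O(q^{3k/4})$ such that $\f{\n}{\gamma_1}$ must revisit one of its first $N$ values; the claimed truncations of $\cS_1$ and $\cS_2$ follow.

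The main obstacle is verifying that the Weil bound remains nonvacuous after the change of base field, which amounts to requiring that no iterate $\f{m}{X}$ is, over $\overline{\F}$, a constant times a polynomial square. For a stable $f$ this is automatic: $\f{\n}{X}$ is irreducible by hypothesis and its discriminant was already forced to be a nonzero (nonsquare or square) element of $\F$ in Theorem~\ref{thm:criterium}, so the iterates are separable. With this nondegeneracy confirmed, the proof of~\cite[Theorem~1]{Ostafe09} transfers verbatim to give the claimed bound on $N$.
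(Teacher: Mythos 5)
Your proposal is correct and follows essentially the same route as the paper: reduce $\prod_{i}\f{\n}{\gamma_i}$ to $\Nm_{q^k|q}\(\f{\n}{\gamma_1}\)$ via the Frobenius-orbit structure of the roots of the irreducible $f'$, then run the character-sum orbit-length argument of~\cite[Theorem~1]{Ostafe09} over $\F[q^k]$ to get $N=O\(q^{3k/4}\)$. The paper's own proof is exactly this reduction followed by a citation; you merely make explicit some points the paper glosses over (the $n$-periodic sign coming from the prefactor $\principal{\ff}^{(n-1)k+1}$, which must be carried through the inner sums rather than literally twisted into the character, and the separability of the irreducible iterates needed for the Weil bound to apply).
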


\section{Non-existence of certain cubic stable polynomials when $\p$=3}
\label{sec:nonExistence}
The existence of stable polynomials is difficult to prove. For $\p=2$, there are
no stable quadratic polynomials as shown in~\cite{A}, whereas
for $\p>2$, there is a big number of them as is shown in~\cite{GomezNicolas10}.
In this section, we show that for certain polynomials of degree 3, $f^{(3)}$ is a reducible
polynomial when $\p=3$. 

This result also appears in~\cite{ALOS}, but we think this approach uses new
ideas that could be of independent interest. 
For this approach, we need the following
result which can be found in~\cite[Corollary 4.6]{Menezes93}.
\begin{lemma}
\label{lem:menezes93}
  Let $q=p^s$ and $f(X)=X^{\p}-a_1 X-a_0\in\FX$ with $a_1 a_0\neq 0$. Then $f$ is irreducible over
  $\F$ if and
  only if $a_1=b^{\p-1}$ and $\Tr_{\q|p}(a_0/b^{\p})\neq 0.$
\end{lemma}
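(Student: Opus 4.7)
The plan is to reduce the statement to the classical Artin--Schreier criterion for $Y^p - Y - c$. If $a_1 = b^{p-1}$ for some $b \in \mathbb{F}_q^*$, then the substitution $X = bY$ converts $f(X)$ into $b^p(Y^p - Y - a_0/b^p)$, so $f$ is irreducible over $\mathbb{F}_q$ if and only if the Artin--Schreier polynomial $g(Y) = Y^p - Y - a_0/b^p$ is. The argument therefore splits into two parts: (a) prove the Artin--Schreier criterion, and (b) show that irreducibility of $f$ forces $a_1$ to already be a $(p-1)$-th power in $\mathbb{F}_q$.

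For (a), let $\alpha$ be any root of $g$ in the algebraic closure, so $\alpha^p = \alpha + c$ where $c = a_0/b^p$. Iterating the Frobenius gives $\alpha^{p^j} = \alpha + c + c^p + \cdots + c^{p^{j-1}}$, and in particular $\alpha^q = \alpha + \Tr_{q|p}(c)$. If this trace vanishes, then $\alpha \in \mathbb{F}_q$ and $g$ splits completely into linear factors $Y - (\alpha + j)$, $j \in \mathbb{F}_p$. If the trace is nonzero, then $\alpha^{q^j} = \alpha + j\,\Tr_{q|p}(c) = \alpha$ precisely when $p \mid j$, so the minimal polynomial of $\alpha$ over $\mathbb{F}_q$ has degree exactly $p$, and $g$ is irreducible.

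For (b), note that $f'(X) = -a_1 \neq 0$, so $f$ has $p$ pairwise distinct roots in its splitting field. If $\alpha, \alpha'$ are two of them and $\beta = \alpha' - \alpha$, then in characteristic $p$ the identity $f(\alpha + \beta) - f(\alpha) = 0$ collapses to $\beta^p - a_1 \beta = 0$, hence $\beta^{p-1} = a_1$. Thus some root $b$ of $X^{p-1} - a_1$ lies in $\mathbb{F}_{q^p}$ (the splitting field of the irreducible $f$). The main obstacle is to descend $b$ from $\mathbb{F}_{q^p}$ to $\mathbb{F}_q$. Since $b^q$ is also a root of $X^{p-1} - a_1$, and any two such roots differ by a $(p-1)$-th root of unity, i.e.\ an element of $\mathbb{F}_p^*$, we have $b^q = cb$ for some $c \in \mathbb{F}_p^*$. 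Iterating $p$ times yields $b^{q^p} = c^p b = cb$, because $c \in \mathbb{F}_p$; on the other hand $b \in \mathbb{F}_{q^p}$ forces $b^{q^p} = b$. Hence $c = 1$, i.e.\ $b \in \mathbb{F}_q$, as needed.

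Combining (a) and (b) with the initial reduction delivers both directions of the equivalence. Everything outside the descent step in (b) is a direct Frobenius computation; the Galois-theoretic pull-back of $b$ to the ground field is the one genuinely subtle ingredient, and it crucially uses both that the group of $(p-1)$-th roots of unity sits inside $\mathbb{F}_p$ and that the splitting field has degree exactly $p$ over $\mathbb{F}_q$.
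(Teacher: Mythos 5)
Your proof is correct, but note that the paper itself does not prove this lemma at all: it is imported verbatim as \cite[Corollary 4.6]{Menezes93}, so there is no internal argument to compare against. What you have supplied is a complete, self-contained derivation, and every step checks out. The substitution $X=bY$ does turn $f$ into $b^p\bigl(Y^p-Y-a_0/b^p\bigr)$; your part (a) is the standard Artin--Schreier criterion, correctly proved by iterating Frobenius to get $\alpha^{q^j}=\alpha+j\,\Tr_{q|p}(c)$; and part (b) correctly identifies the roots of $X^{p-1}-a_1$ with the differences of roots of $f$ via the additivity of $X\mapsto X^p-a_1X$ in characteristic $p$. The descent step is the genuinely nontrivial point and you handle it properly: from $b^q=cb$ with $c\in\mathbb{F}_p^*$ you get $b^{q^p}=c^pb=cb$, while $b\in\mathbb{F}_{q^p}$ forces $b^{q^p}=b$, so $c=1$ and $b\in\F$. (One could equivalently argue that $[\F[q](b):\F]$ divides both $p-1$, since $b$ is a root of $X^{p-1}-a_1$ over $\F$, and $p$, since $b\in\F[q^p]$, hence equals $1$.) A minor stylistic remark: it is worth observing explicitly that the trace condition is independent of the choice of $b$, since replacing $b$ by $cb$ with $c\in\mathbb{F}_p^*$ multiplies $a_0/b^p$ by $c^{-1}\in\mathbb{F}_p^*$ and so does not affect whether the trace vanishes; this makes the statement of the lemma well posed. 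Your argument buys the paper a proof it otherwise outsources to a reference, at the cost of about a page; it is elementary and uses nothing beyond Frobenius computations.
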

Based on this result, we can present an irreducibility criterium for polynomials
of degree 3 in characteristic $3$.
\begin{lemma}
\label{lem:appliedMenezes}
  Let $\p=3$ and $\q=3^{\s}$. Then $f(X)=X^3-a_2X^2-a_1X-a_0$ is irreducible over
  $\F$ if and only if
  \begin{enumerate}
  \item $a_1=b^2$ and $\Tr_{\q|3}(a_0/b^{3})\neq 0,$ if $a_2 =0$ and $a_1\neq 0$;
  \item $a_2^4/(a_2^2a_1^2+a_1^3-a_0a_2^3)=b^2$ and
    $\Tr_{\q|3}(1/a_2b)\neq 0,$ if $a_2 \neq 0$,
  \end{enumerate}
  where $\Tr_{\q|3}$ represent the trace map of $\F$ over $\F[3]$.
\end{lemma}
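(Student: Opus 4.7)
The plan is to reduce both cases to Lemma~\ref{lem:menezes93}, which handles polynomials of the shape $X^{3}-a_1X-a_0$.

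For case~(1), where $a_2=0$, the polynomial $f$ already has the required shape and the criterion is a direct application of Lemma~\ref{lem:menezes93} with $p=3$. (If also $a_0=0$, then $X\mid f$ forces reducibility and $\Tr_{\q|3}(0/b^{3})=0$ correctly fails; this edge case is consistent with the statement.)

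For case~(2), where $a_2\neq 0$, the usual Tschirnhaus shift $X\mapsto Y+a_{2}/3$ is unavailable in characteristic three. I would instead try the \emph{inversive} substitution
\begin{equation*}
  X \;=\; \frac{1}{Y}+\frac{a_1}{a_2},
\end{equation*}
where the shift $a_1/a_2$ is dictated by the requirement that the coefficient of $Y^{2}$ in $Y^{3}f(X)$ vanish. A short computation using $(u+v)^{3}=u^{3}+v^{3}$ and $2\equiv -1\pmod{3}$ yields
\begin{equation*}
  Y^{3}\,f\!\left(\tfrac{1}{Y}+\tfrac{a_1}{a_2}\right)
  \;=\; L\,Y^{3}\,-\,a_2\,Y\,+\,1,
  \qquad L \;=\; \frac{a_2^{2}a_1^{2}+a_1^{3}-a_0 a_2^{3}}{a_2^{3}}.
\end{equation*}
Note that $L=f(a_1/a_2)$, so $L=0$ would exhibit a root of $f$ in $\F$, making $f$ reducible while the denominator in condition~(2) simultaneously vanishes, so the criterion is consistently void in that case. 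Assuming $L\neq 0$, dividing by $L$ produces the monic polynomial $Y^{3}-(a_2/L)Y+1/L$ fitting the hypotheses of Lemma~\ref{lem:menezes93}, and this polynomial is irreducible over $\F$ if and only if $f$ is, since $X\mapsto 1/(X-a_1/a_2)$ is an $\F$-rational bijection between the relevant roots.

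Applying Lemma~\ref{lem:menezes93} produces the requirements $a_2/L=b^{2}$ and $\Tr_{\q|3}((-1/L)/b^{3})\neq 0$. Substituting the value of $L$ turns the first into $b^{2}=a_2^{4}/(a_2^{2}a_1^{2}+a_1^{3}-a_0 a_2^{3})$, matching the stated formula. Using $b^{2}=a_2/L$ gives $b^{3}=a_2 b/L$ and hence $(-1/L)/b^{3}=-1/(a_2 b)$, so the trace condition becomes $\Tr_{\q|3}(1/(a_2 b))\neq 0$, again as stated. The main obstacle is recognising that an inversive substitution rather than a polynomial one is the correct tool in characteristic three; once that idea and the shift $a_1/a_2$ are identified, the remaining calculations are routine characteristic-three bookkeeping, and one verifies that the assumptions $a_2\neq 0$ and $L\neq 0$ suffice to satisfy the $a_1a_0\neq 0$ hypothesis needed in Lemma~\ref{lem:menezes93}.
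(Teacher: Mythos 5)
Your proof is correct and follows essentially the same route as the paper: the single inversive substitution $X=1/Y+a_1/a_2$ is exactly the composition of the paper's two steps (first the shift $X\mapsto X+a_1/a_2$ to kill the linear term, then passage to the reciprocal polynomial $g^*(Y)=Y^3g(1/Y)$), and the subsequent application of Lemma~\ref{lem:menezes93} and the simplification $b^3=a_2b/L$ match the paper's argument. Your extra remarks on the degenerate cases $L=0$ and $a_0=0$ are sound but not needed for the equivalence as stated.
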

\begin{proof}
  The case $a_2=0$ is a direct application of Lemma~\ref{lem:menezes93}.
  In the other case, we take the polynomial
  \begin{multline*}
    f(X+a_1/a_2)=(X+a_1/a_2)^3-a_2(X+a_1/a_2)^2-a_1(X+a_1/a_2)-a_0=\\
    X^3-a_2X^2-a_0+a_1^2/a_2+a_1^3/a_2^3= X^3-a_2X^2+(a_1^2a_2^2+a_1^3-a_0a_2^3)/a_2^3.
  \end{multline*}
  Notice that $f(X+a_1/a_2)$ is irreducible if and only if $f(X)$ is irreducible.

  We denote $g(X)=f(X+a_1/a_2)$ to ease the notation and $g^*$
  \textit{the reciprocal polynomial} of $g$, i. e.
  \begin{equation*}
    g^*(X)=X^3g\(\frac{1}{X}\).
  \end{equation*}
  By \cite[Theorem 3.13]{LN97}, $g^*$ is irreducible if and only if $g$ is.
  Applying Lemma~\ref{lem:menezes93}, we get the result.
\end{proof}
For simplicity, we proved an irreducibility criterium for monic polynomials,
however the proof holds for
non-monic polynomials as well taking into account the principal coefficient.

Using Lemma~\ref{lem:appliedMenezes} and following the same lines as in~\cite{A}, 
we can prove now  the following result.

\begin{theorem}
\label{thm:deg3}
For any polynomial $f\in\mathbb{F}_3[X]$ of the form $f(X)=a_3X^3-a_1X-a_0$, at least 
one of the following polynomials $f,f^{(2)}$ or $ f^{(3)}$ is a reducible polynomial.
\end{theorem}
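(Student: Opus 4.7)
I would in fact show the sharper statement that one of $f$ or $f^{(2)}$ is already reducible; the $f^{(3)}$ alternative in the theorem turns out to be unnecessary. The key tools are Capelli's Lemma~\ref{lem:Capelli} combined with the Artin--Schreier-type irreducibility criterion in Lemma~\ref{lem:appliedMenezes}. First I would dispose of the degenerate cases: if $a_1=0$, then $f(X)=a_3X^3-a_0=a_3(X-\gamma)^3$ in characteristic $3$, where $\gamma$ is the unique cube root of $a_0/a_3$ in $\mathbb{F}_3$ (Frobenius is bijective on $\mathbb{F}_3$), so $f$ is reducible; if $a_0=0$, then $X\mid f(X)$. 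Assume henceforth $a_0a_1\neq 0$ and (otherwise we are done) that $f$ itself is irreducible over $\mathbb{F}_3$. Applying Lemma~\ref{lem:appliedMenezes}(1) to the monic reduction $X^3-(a_1/a_3)X-a_0/a_3$ yields $a_1/a_3=c^2$ for some $c\in\mathbb{F}_3^\ast$.

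Next I would apply Capelli to $f^{(2)}=f\circ f$: this composition is irreducible iff $f$ is irreducible and, for any root $\alpha$ of $f$ in $\mathbb{F}_{27}$, the polynomial
\[
f(X)-\alpha \;=\; a_3X^3-a_1X-(a_0+\alpha)
\]
is irreducible over $\mathbb{F}_3(\alpha)=\mathbb{F}_{27}$. Since $f$ is irreducible over $\mathbb{F}_3$, $-a_0\in\mathbb{F}_3$ cannot be a root, so $a_0+\alpha\neq 0$, and the non-monic form of Lemma~\ref{lem:appliedMenezes}(1) applies over $\mathbb{F}_{27}$. The condition $a_1/a_3=c^2$ is inherited with the same $c\in\mathbb{F}_3\subset\mathbb{F}_{27}$, so $f(X)-\alpha$ is irreducible over $\mathbb{F}_{27}$ if and only if
\[
T \;:=\; \Tr_{27\mid 3}\!\left(\frac{a_0+\alpha}{a_3c^3}\right) \;\neq\; 0.
\]

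The crucial observation is that $T=0$. By additivity, the contribution from $a_0$ equals $\Tr_{27\mid 3}(a_0/(a_3c^3))=3\cdot a_0/(a_3c^3)=0$ in characteristic $3$, since the argument lies in $\mathbb{F}_3$. The contribution from $\alpha$, using $\mathbb{F}_3$-linearity of the trace and $(a_3c^3)^{-1}\in\mathbb{F}_3$, is
\[
(a_3c^3)^{-1}\bigl(\alpha+\alpha^3+\alpha^9\bigr),
\]
and the parenthesised sum is the sum of the three Galois conjugates of $\alpha$, i.e.\ the sum of the roots of $f$, which vanishes precisely because the coefficient of $X^2$ in $f$ is zero. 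Hence $T=0$, so $f(X)-\alpha$ is reducible over $\mathbb{F}_{27}$, and therefore $f^{(2)}$ is reducible over $\mathbb{F}_3$ by Capelli.

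The main technical point to verify is the passage to the non-monic version of Lemma~\ref{lem:appliedMenezes} over the extension $\mathbb{F}_{27}$, together with the non-degeneracy hypothesis $a_1a_0\neq 0$ underlying Lemma~\ref{lem:menezes93}: one must check $a_0+\alpha\neq 0$, which follows from irreducibility of $f$ as above. The rest is routine linear algebra over $\mathbb{F}_3$; the conceptual heart of the argument is that the absence of an $X^2$-term forces $\Tr_{27\mid 3}(\alpha)=0$ whenever $\alpha$ is a root of the irreducible $f$, and this is exactly what kills the Artin--Schreier obstruction that would be required for $f(X)-\alpha$ to be irreducible over $\mathbb{F}_{27}$.
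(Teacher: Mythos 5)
Your proof is correct, and it in fact establishes a sharper statement than the theorem: already one of $f$, $f^{(2)}$ must be reducible. The paper uses exactly the same ingredients --- Capelli's Lemma~\ref{lem:Capelli}, the irreducibility criterion of Lemma~\ref{lem:appliedMenezes} (via Lemma~\ref{lem:menezes93}), transitivity of the trace, and the vanishing of the subleading coefficient --- but applies them one iteration higher: it assumes $f,f^{(2)},f^{(3)}$ all irreducible, takes a root $\gamma$ of $f^{(2)}$, and derives the contradiction from $\Tr_{q^9|q}(\gamma)=0$, which holds because the coefficient of $X^8$ in $f^{(2)}$ vanishes. You instead take a root $\alpha$ of $f$ itself and observe that $\Tr_{27|3}(\alpha)=\alpha+\alpha^3+\alpha^9$ is the sum of the roots of $f$, which vanishes because $f$ has no $X^2$ term; this kills the Artin--Schreier trace obstruction already at the second iterate. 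The paper's choice to go to $f^{(3)}$ mirrors the characteristic-two quadratic argument of~\cite{A}, where the analogous computation at the second iterate produces $\Tr_{q^2|q}(\alpha)=-b/a\neq 0$ (the quadratic's linear coefficient is nonzero) and one genuinely must pass to the third iterate; for the cubic $a_3X^3-a_1X-a_0$ the missing $X^2$ coefficient makes the obstruction appear one level earlier, exactly as you exploit. Your degenerate-case analysis ($a_1=0$ or $a_0=0$ forces $f$ reducible over a finite field) and the verification that $a_0+\alpha\neq 0$ so that Lemma~\ref{lem:menezes93} applies are both correct, and passing to the monic polynomial $(f-\alpha)/a_3$ over $\mathbb{F}_{27}$ is legitimate since scaling by a nonzero constant does not affect irreducibility. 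So your route is both simpler (it works in $\mathbb{F}_{27}$ rather than $\mathbb{F}_{q^9}$) and strictly stronger; the only caveat is that the paper's proof is really carried out over a general $\mathbb{F}_{3^s}$, but your argument transfers verbatim, replacing $\Tr_{27|3}$ by $\Tr_{q^3|3}=\Tr_{q|3}\circ\Tr_{q^3|q}$ and using $\Tr_{q^3|q}(\alpha)=0$.
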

\begin{proof}
Suppose that $f,\ f^{(2)},\ f^{(3)}$ are all irreducible polynomials.
Using Lemma~\ref{lem:Capelli}, $f^{(3)}$ is irreducible if and only if
$f^{(2)}$ is irreducible over $\F$ and $f-\gamma$ is irreducible over $\F[q^9]$, where $\gamma$ is a root of $f^{(2)}$. 
Thus, the monic polynomial $h=\frac{f-\gamma}{a_3}$ is irreducible over $\F[q^9]$ 
and we can apply now Lemma~\ref{lem:appliedMenezes} from where we get that
$\Tr_{\q^9|3}(\frac{a_0-\gamma}{a_3 b^3})\neq 0$, where $b^2=a_1$ and $b\in\F[q^9]$. 

Notice that $b\in\F[q]$. Indeed, as $b$ is the root of the polynomial
$X^2-a_1$, then either $b\in\F[q]$ or $b\in\F[q^2]$. Since $b\in\F[q^9]$ 
we obtain that $b\in\F[q]$.
 Using the
properties of the trace map we obtain
\begin{equation*}
  \Tr_{q^9|3} \(\frac{a_0-\gamma}{a_3 b^3}\)=\Tr_{\q^9|3}\(\frac{-\gamma}{a_3 b^3}\),
\end{equation*}
and from here we conclude that the right hand side of the last equation is non zero. 
Using now the transitivity of the trace, see~\cite[Theorem 2.26]{LN97}, we get
\begin{equation*}
  \Tr_{\q^9|3}\(\frac{-\gamma}{a_3 b^3}\)=\Tr_{q|3}\(\Tr_{\q^9|\q}\(\frac{-\gamma}{a_3 b^3}\)\)=\Tr_{q|3}\(\frac{\Tr_{\q^9|\q}(-\gamma)}{a_3b^3}\).
\end{equation*}
Now, $f^{(2)}$ is an irreducible polynomial with roots 
$\gamma, \gamma^q,\ldots,\gamma^{q^{8}}$. Thus, $\Tr_{\q^9|\q}(\gamma)$ 
is given by the coefficient of the term $X^8$ in $f^{(2)}$, which is zero. 
This shows that $\Tr_{\q^9|3}(\gamma)=0$, which is a contradiction with the fact that
$f^{(3)}$ is irreducible.
\end{proof}

We note that Theorem~\ref{thm:deg3} cannot be extended to infinite fields. 
As in~\cite{ALOS}, let $\K = \mathbb{F}_3(T)$ be the rational function field 
in $T$ over $\mathbb{F}_3$, where $T$ is transcendental 
over $\mathbb{F}_3$. Take $f(X)=X^3+T\in \K[X]$. Then it is easy to see that
$$
f^{(n)}(X)=X^{3^n}+T^{3^{n-1}}+T^{3^{n-2}}+\cdots+T^3+T.
$$
Now from the Eisenstein criterion for function fields 
(see~\cite[Proposition~III.1.14]{Sti}, for example), 
it follows that for every $n\ge 1$, the polynomial $f^{(n)}$ is irreducible  
over $\K$. Hence, $f$ is stable. 
\section{On the number of stable polynomials}
\label{sec:numberStables}
In this section we obtain an estimate for the number of stable polynomials of
certain degree $d$. Note that, from Corollary \ref{cor:condition},
it suffices to estimate the number of nonsquares of the orbit \eqref{eq:even orbit}
for even $d$, or the number of squares of \eqref{eq:odd orbit}
for odd $d$.

For a given $d$, let
$\ff(X)=a_{d}X^{d}+a_{d-1}X^{d-1}+\cdots+a_1X+a_0\in\mathbb{F}_q[X]$
and we define
\begin{equation*}
\fa{\ff}{l}=\prod_{i=1}^{k}\ff^{(l)}({\gamma_i}),
\end{equation*}
which is a polynomial in the variables $a_{0},\ldots,\ a_{d}$ and with coefficients in
$\F$.

Following~\cite{Ostafe09}, the number of stable polynomials of degree $d$,
which will be denoted by $S_{d}$, satisfies the inequality
\begin{equation}\label{eq:sums}
S_{d}\le \frac{1}{2^{K}}\sum_{a_0\in\F}\cdots\sum_{a_{d}\in\F^*}
\prod_{l=1}^{K}(1\pm\chi(\fa{\ff}{l})),\ \forall K\in\mathbb{Z}^+,
\end{equation}

\noindent where $\chi$ is the multiplicative quadratic character of $\F$.
The sign of $\chi$ depends on $d$ and is chosen in order to count
the elements of the orbit of $\ff$ which satisfy the condition of stability.
Since the upper bound of $S_{d}$ is independent of this choice, let
us suppose from now on that $\chi$ is taken with $+$. If we expand and rearrange
the product, we obtain $2^K-1$ sums of the shape
\begin{equation*}
\sum_{a_0\in\F}\cdots\sum_{a_{d}\in\F^*}\chi
\left (\prod_{j=1}^{\mu}\fa{\ff}{l_{j}}\right ),\, 1\le l_1<\cdots < l_{\mu}\le K,
\end{equation*}

\noindent with $\mu\ge 1$ plus one trivial sum correponding to 1 in (\ref{eq:sums}).

The upper bound for $S_{d}$ will be obtained using the Weil bound for
character sums, which can be found in \cite[Lemma 1]{GomezNicolas10}. This result
can only be used when $\prod_{j=1}^{\mu}\fa{\ff}{l_{j}}$ is not a square polynomial.
The next lemmas are used to estimate the number of values for $a_{1},\ldots,a_{d}$ such that
the resulting polynomial in $a_0$ is a square.
The first lemma is a bound on the number of zeros of two multivariate polynomials.
A more general inequality is given by the Schwartz-Zippel lemma. For a proof, we
refer the reader to~\cite{Gathen99}.
\begin{lemma}
\label{multiple_roots} Let $F(Y_0,Y_1,\ldots,Y_{d}),
G(Y_0,Y_1,\ldots,Y_{d})$ be two polynomials of degree
$d_1$ and $d_2$, respectively, in $d+1$ variables with
\begin{equation*}
\gcd\left(F(Y_0,Y_1,\ldots,Y_{d}), G(Y_0,Y_1,\ldots,Y_{d})\right)=1.
\end{equation*}
Then, the number of common roots in $\F$ is bounded by
$d_1d_2\q^{d-1}$.
\end{lemma}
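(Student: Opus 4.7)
The plan is to eliminate one of the $d+1$ variables by means of a resultant and then apply the univariate Schwartz-Zippel inequality to the resulting polynomial in $d$ variables. Because $\mathbb{F}_q[Y_0,\ldots,Y_d]$ is a UFD and $F,G$ are coprime, after an invertible $\mathbb{F}_q$-linear change of coordinates (which preserves both the number of $\mathbb{F}_q$-rational common zeros and the coprimality of $F$ and $G$) I may assume that $F$ and $G$ both have positive degree in, say, $Y_0$ and remain coprime in $\mathbb{F}_q(Y_1,\ldots,Y_d)[Y_0]$. This is the standard setup for elimination.

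Next, I form the resultant $R(Y_1,\ldots,Y_d)=\mathrm{Res}_{Y_0}(F,G)\in\mathbb{F}_q[Y_1,\ldots,Y_d]$. Coprimality in $\mathbb{F}_q(Y_1,\ldots,Y_d)[Y_0]$ guarantees $R\not\equiv 0$, and a Sylvester-matrix computation (each row of the Sylvester matrix consists of coefficients of $F$ or $G$ viewed as polynomials in $Y_1,\ldots,Y_d$ of total degree at most $d_1$ or $d_2$) bounds $\deg R\le d_1 d_2$. Any common $\mathbb{F}_q$-rational zero $(y_0,y_1,\ldots,y_d)$ of $F$ and $G$ projects to a zero $(y_1,\ldots,y_d)$ of $R$, so the Schwartz-Zippel inequality applied to the single polynomial $R$ gives at most $d_1 d_2 q^{d-1}$ possible projections.

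It then remains to bound the fibre size of this projection; this is the main obstacle, since the naive estimate (each fibre has $\le d_1$ values of $y_0$) only yields the weaker bound $d_1^2 d_2 q^{d-1}$. To recover the sharp $d_1 d_2 q^{d-1}$ I would invoke the affine Bezout perspective: because $F$ and $G$ share no common factor, the intersection $V(F)\cap V(G)\subseteq \mathbb{A}^{d+1}$ has pure codimension two and total degree at most $d_1 d_2$, and the standard point-counting inequality for algebraic sets over finite fields (a variety of dimension $e$ and degree $D$ in $\mathbb{A}^n(\mathbb{F}_q)$ has at most $D\,q^{e}$ rational points) applied with $e=d-1$ and $D\le d_1 d_2$ delivers the claimed bound. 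The Schwartz-Zippel/resultant setup above is essentially one computational avatar of this geometric estimate, so the two routes agree; the difficulty lies entirely in avoiding the loss of a factor of $d_1$ when one counts $y_0$-fibres one at a time.
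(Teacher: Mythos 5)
The paper itself gives no proof of Lemma~\ref{multiple_roots}: it simply refers the reader to~\cite{Gathen99}. So there is no in-paper argument to compare yours against, and your proposal has to be judged on its own. Judged that way, it is essentially correct, but only because of its second half. The first half (eliminate $Y_0$ by a resultant, apply Schwartz--Zippel to $R=\mathrm{Res}_{Y_0}(F,G)$) does not prove the stated bound, as you yourself concede: it controls only the projections of the common zeros, and the fibres cost an extra factor of $\min(d_1,d_2)$. Two smaller points about that half: the row-by-row degree count you describe for the Sylvester determinant gives $d_1e_2+d_2e_1\le 2d_1d_2$, not $d_1d_2$ (the sharp bound needs the weighted homogeneity of the resultant, using that the coefficient of $Y_0^j$ has total degree at most $d_i-j$); and the ``invertible $\F$-linear change of coordinates'' putting both polynomials in general position requires a point of $\mathbb{P}^d(\F)$ off the two leading forms, which exists only when $\q$ is large relative to $d_1+d_2$. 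The second half is the actual proof: coprimality rules out codimension-one components, Krull's height theorem rules out codimension $\ge 3$, so $V(F)\cap V(G)$ is a proper intersection of pure dimension $d-1$; Bezout for proper intersections bounds the sum of the degrees of its components by $d_1d_2$; and the standard estimate that an affine variety of dimension $e$ and degree $D$ has at most $D\q^{e}$ points over $\F$ gives $d_1d_2\q^{d-1}$. That route is complete and correct provided you are willing to quote those two (standard but nontrivial) facts, and once you invoke it the entire resultant/Schwartz--Zippel setup is logically superfluous; I would either drop the first half or keep only the weaker elementary bound it yields, noting that it would still suffice for the application in this paper at the cost of worsening $d^{2K}$ to $d^{3K}$ in the subsequent lemma.
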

The next lemma gives a bound for the number of ``bad'' choices of
$a_1,\ldots,a_{d}$, that is, the number of choices of
$a_1,\ldots,a_{d}$ such that
$\prod_{j=1}^{\mu}\fa{\ff}{l_{j}}$ is a square polynomial in $a_0$.
\begin{lemma}
\label{lem:roots}
For fixed integers $l_1,\ldots, l_{\mu}$ such that $1\le l_1<\cdots < l_{\mu}\le K$,
the polynomial
\[
\prod_{j=1}^{\mu}\fa{\ff}{l_{j}}
\]
is a square polynomial in the variable $a_0$ up to a multiplicative constant
only for at most $O(d^{2K}\q^{d-1})$
choices of $a_1,\ldots,a_{d}$.
\end{lemma}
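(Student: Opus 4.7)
My plan is to reduce the claim to a Schwartz--Zippel estimate on the zero locus of the discriminant
\begin{equation*}
\Delta(\mathbf{a}') := \mathrm{Disc}_Y\!\left(\prod_{j=1}^{\mu} F_{l_j}(Y,\mathbf{a}')\right),
\end{equation*}
where $P(Y,\mathbf{a}') := \prod_{j=1}^{\mu} F_{l_j}(Y,\mathbf{a}')$ is viewed as an element of $\F[\mathbf{a}'][Y]$ with $Y$ playing the role of $a_0$ and $\mathbf{a}':=(a_1,\ldots,a_d)$.

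First I would observe that the leading $Y$--coefficient of each $F_l$ is a positive power of $a_d$ --- one checks by induction that $\mathrm{lc}_Y(f^{(l)}) = a_d^{(d^{l-1}-1)/(d-1)}$ --- so $P(Y,\mathbf{a}')$ is non--constant in $Y$ for every $\mathbf{a}'$ with $a_d\in\F^*$. Hence, if $P(Y,\mathbf{a}')\in\F[Y]$ is a square up to a multiplicative constant, every irreducible factor over $\F[Y]$ has even multiplicity, so $\Delta(\mathbf{a}')=0$. Thus the bad set is contained in $\{\mathbf{a}'\in\F^d:\Delta(\mathbf{a}')=0\}$, and the problem reduces to bounding this zero set.

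The crux is to show $\Delta\not\equiv 0$ in $\F[\mathbf{a}']$, equivalently that $P$ is squarefree in $\F(\mathbf{a}')[Y]$. I would argue that the $k\sum_j d^{l_j-1}$ roots of $P(Y,\mathbf{a}')$ in $\overline{\F(\mathbf{a}')}$ --- coming from the equations $f^{(l_j)}(\gamma_i)=0$ for $1\le i\le k$, $1\le j\le\mu$ --- are generically distinct. Since the $l_j$'s are distinct and $\deg_Y F_l = kd^{l-1}$ is strictly increasing in $l$, the factors $F_{l_j}$ are themselves pairwise distinct; the finer step, ruling out coincidences of $Y$--roots within a single $F_l$ and across the different $F_{l_j}$, uses the iterated structure of $f^{(l)}$ together with the fact that each $\gamma_i$ depends only on $\mathbf{a}'$. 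Establishing this squarefree--ness is the main obstacle of the proof.

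Granting $\Delta\not\equiv 0$, I then bound its degree using $n:=\deg_Y P\le k\sum_{j=1}^{\mu} d^{l_j-1} = O(d^{K-1})$ and the total--degree estimate $\deg_{\mathrm{tot}}(P)=O(d^K)$, which follows by induction from $\deg_{\mathrm{tot}}(f^{(l)}(X)) = (d^{l+1}-1)/(d-1)$. Since $\mathrm{Disc}_Y(P)$ equals, up to a power of the leading $Y$--coefficient, the resultant $\mathrm{Res}_Y(P,\partial P/\partial Y)$, its degree in $\mathbf{a}'$ is at most $(2n-2)\deg_{\mathrm{tot}}(P)=O(d^{2K})$. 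A standard Schwartz--Zippel estimate in $d$ variables then yields $|\{\mathbf{a}'\in\F^d:\Delta(\mathbf{a}')=0\}|\le O(d^{2K}q^{d-1})$, as required.
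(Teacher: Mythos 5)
Your reduction of the bad set to the zero locus of $\Delta(\mathbf{a}')=\mathrm{Disc}_{a_0}\bigl(\prod_{j}F_{l_j}\bigr)$ is clean, and the observation that the leading $a_0$-coefficient of each $F_{l_j}$ is a power of $a_d$ (so the $a_0$-degree never drops and the ``specialization becomes constant'' case disappears) is a genuine simplification over the paper. But the argument has a gap exactly where you flag ``the main obstacle'': you never prove that $\prod_j F_{l_j}$ is squarefree in $a_0$ over $\F(a_1,\ldots,a_d)$, i.e.\ that $\Delta\not\equiv 0$, and without that the containment of the bad set in $Z(\Delta)$ yields nothing. This is a strictly stronger input than the one the paper establishes. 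The paper only shows that $\prod_j F_{l_j}$ is not a perfect square \emph{as a multivariate polynomial} up to a constant, and it does so by an explicit two-parameter specialization ($f=(X-b)^{d}+c+b$ and variants, for which $f^{(n)}(b)=b+H_n(c)$ with the degrees $\deg H_n=d^{n-1}$ pairwise distinct). Its count is then organized around one irreducible factor $G_1$ of odd multiplicity $d_1$ in the factorization $G_1^{d_1}\cdots G_h^{d_h}$: a specialization can only become a square if $G_1$ becomes constant, acquires a multiple root (a common zero of $G_1$ and $\partial G_1/\partial a_0$, bounded via Lemma~\ref{multiple_roots}), or shares a root with some other $G_j$. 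That architecture is designed precisely so that the weak statement ``not a multivariate square'' suffices.

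To make your route rigorous you would need, generically in $a_1,\ldots,a_d$: (i) each $f^{(l_j)}(\gamma_i)$ separable in $a_0$; (ii) no shared $a_0$-roots between $f^{(l_j)}(\gamma_i)$ and $f^{(l_{j'})}(\gamma_{i'})$ for $(i,j)\neq(i',j')$; and (iii) no irreducible factor of the product lying in $\F[a_1,\ldots,a_d][a_0^{\,p}]$ --- in characteristic $p$ an irreducible factor can be inseparable in $a_0$, which forces $\Delta\equiv 0$ even when the polynomial is not a square, so even multivariate squarefreeness would not save you. None of (i)--(iii) is addressed, and (ii) in particular (distinct critical points, distinct iteration levels) is not obviously easier than the lemma itself; the paper's specialization only pins down a two-dimensional slice and cannot certify generic squarefreeness. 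Either supply these arguments or adopt the paper's weaker non-squareness claim together with its factor-by-factor count.
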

\begin{proof}
For even degree which is coprime to $\p$, we consider the polynomial $f=(X-b)^{d}+c+b$, where
$b,c$ are considered as variables. Then $f'=d(X-b)^{d-1}$ and
\begin{equation*}
f^{(n)}(b)=b+H_n(c),
\end{equation*}
where $ \deg H_n(c)= d^{n-1}$.

For odd degree, coprime to $\p$, we consider the following polynomial
$f= (X-b)^{d-1}(X-b+1)+c+b$ with the derivative
$f'=(X-b)^{d-2}(d(X-b)+d-1).$
Notice that, if the degree of this polynomial is coprime to the characteristic $\p$,
then $f'$ has two different roots $ b, b+(1-d)d^{-1}$.
Substituting these in the polynomial $f$, we get
\begin{eqnarray*}
f^{(n)}(b) &=&b+H_n(c), \\
f^{(n)}( b+(1-d)d^{-1})&=&b+L_n(c),\\
\end{eqnarray*}
where $L_n\neq H_n$ and $\deg L_n(c)=\deg H_n(c)= d^{n-1}.$
In either of the two cases, we can compute the irreducible factors
of $\Res{f^{(k)}}{\ff '}$.

When the degree is not coprime to the characteristic, 
take $f=(X-b)^{d}+(X-b)^2+c+b$ and the proof is similar to the last two cases.

This proves that the following polynomial
\[
\prod_{j=1}^{\mu}\fa{\ff}{l_{j}}
\]
is not a square polynomial as a multivariate polynomial up to a multiplicative constant.

Let
$
\prod_{j=1}^{\mu}\fa{\ff}{l_{j}}=G_1(a_0,\ldots, a_{d})^{d_1}
\cdots G_h(a_0,\ldots, a_{d})^{d_h}
$ be 
the decomposition into a product of irreducible polynomials.

Without loss of generality, $d_1$ is not even
because $\prod_{j=1}^{\mu}\fa{\ff}{l_{j}}$ is not a square of a polynomial
up to a multiplicative constant. Moreover, because $G_1$ is an irreducible
factor of the product $\prod_{j=1}^{\mu}\fa{\ff}{l_{j}}$, then 
there exists $1\le j\le \mu$ such that $G_1$ is an irreducible factor of
$\fa{\ff}{l_{j}}$, which implies that $\deg G_1\le d^{K}.$

We use $G_1(a_0,\ldots, a_{d})$ to count the number of
choices for $a_1,\ldots, a_{d}$ such that
\begin{itemize}
\item the polynomial $\prod_{j=1}^{\mu}\fa{\ff}{l_{j}}$ is a constant polynomial.
\item the polynomial $\prod_{j=1}^{\mu}\fa{\ff}{l_{j}}$ is a square polynomial
 up to a multiplicative constant.
\end{itemize}

There are at most $d^{K\mu}\q^{d-1}$ different choices of $a_1,\ldots, a_{d}$ when
the polynomial can be a constant.

Now, we consider in which cases the polynomial $\prod_{j=1}^{\mu}\fa{\ff}{l_{j}}$ is a square
of a polynomial 
and how these cases will be counted. We have the following two possible situations:
\begin{itemize}
\item $G_1^{d_1}$ is a square, nonconstant, and because $d_1$ is not
 even, then we must have that $G_1$ has
 at least one multiple root. This is only possible if
 $G_1$ and the first derivative with respect to the
 variable $a_0$ of $G_1$ have a common root.
 $G_1$ is an irreducible polynomial, so Lemma~\ref{multiple_roots} applies.
 We remark that the first derivative is a nonzero polynomial. Otherwise
 $G_1$ is a reducible polynomial.
This can only happen in $(\deg G_1)(\deg G_1-1)\q^{d-1}$ cases.

\item $G_1$ and $G_j$ have a common root for some $1\le j\le h$. In this case, using the same argument, there are  at most
 $(\deg G_1)(\deg G_j)\q^{d-1}$
 possible values for $a_1,\ldots, a_{d}$ where it happens.
\end{itemize}
\end{proof}

Now we are able to find a bound for  $S_{d}$, the number of
stable polynomials of degree $d$.



\begin{theorem}
 The number of stable polynomials $\ff\in\FX$  of degree $d$ is
 $O(q^{d+1-1/\log(2d^2)})$.
\end{theorem}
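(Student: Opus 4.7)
The plan is to exploit the inequality \eqref{eq:sums} together with the Weil bound for character sums and Lemma~\ref{lem:roots}, then optimise the free parameter $K$.

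First, I expand the product $\prod_{l=1}^{K}(1+\chi(F_l(a_0,\ldots,a_d)))$ in \eqref{eq:sums} into $2^K$ pieces indexed by subsets of $\{1,\ldots,K\}$. The empty subset gives the trivial contribution $(q-1)q^d$, which after division by $2^K$ becomes the main term $q^{d+1}/2^K$. Each of the remaining $2^K-1$ pieces is a character sum of the form
$$
\Sigma_{l_1,\ldots,l_\mu} \;=\; \sum_{a_0\in\mathbb{F}_q}\sum_{a_1\in\mathbb{F}_q}\cdots\sum_{a_d\in\mathbb{F}_q^*} \chi\Bigl(\prod_{j=1}^{\mu} F_{l_j}(a_0,\ldots,a_d)\Bigr),
\qquad 1\le l_1 < \cdots < l_\mu \le K,
$$
and it is these that must be shown to be small.

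To bound a single $\Sigma_{l_1,\ldots,l_\mu}$, I fix $(a_1,\ldots,a_d)$ and view the product $P(a_0)=\prod_j F_{l_j}$ as a polynomial in $a_0$ of degree $O(d^K)$. I split the outer $(a_1,\ldots,a_d)$-sum into two classes according to Lemma~\ref{lem:roots}. For the \emph{bad} tuples, where $P(a_0)$ is a scalar multiple of a square polynomial, the inner character sum is trivially at most $q$; since Lemma~\ref{lem:roots} gives $O(d^{2K}q^{d-1})$ such tuples, their total contribution is $O(d^{2K}q^d)$. For the \emph{good} tuples the Weil bound yields $|\sum_{a_0}\chi(P(a_0))|=O(d^K\sqrt{q})$, and there are at most $q^d$ such tuples, for a contribution of $O(d^K q^{d+1/2})$. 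Adding these and summing over the $2^K-1$ non-trivial subsets (the factor $2^K-1$ cancels the prefactor $2^{-K}$) gives
$$
S_d \;\ll\; \frac{q^{d+1}}{2^K} + d^{2K}q^d + d^K q^{d+1/2}.
$$

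Finally, I choose $K$ to minimise the right-hand side. Taking $K=\lfloor \log q / \log(2d^2) \rfloor$, so that $(2d^2)^K \asymp q$, balances the first two terms, each then being of order $q^{d+1}/2^K = q^{d+1-1/\log(2d^2)}$; one checks that the Weil term $d^K q^{d+1/2}$ is absorbed. This produces the claimed bound $S_d=O(q^{d+1-1/\log(2d^2)})$. The main technical input is Lemma~\ref{lem:roots}: without the control it provides on the bad class, the non-square hypothesis needed for the Weil bound could not be invoked uniformly, and the savings in the prefactor $2^{-K}$ would be swamped by the error; the rest is the standard sieve-type optimisation in $K$.
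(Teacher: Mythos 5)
Your proof is essentially identical to the paper's: the same expansion of \eqref{eq:sums} with the trivial term $q^{d+1}/2^K$, the same good/bad split of the tuples $(a_1,\ldots,a_d)$ via Lemma~\ref{lem:roots}, the same three-term bound $O\(q^{d+1}/2^{K}+d^{K}q^{d+1/2}+d^{2K}q^{d}\)$, and the same choice $K\approx \log q/\log(2d^2)$. (Your parenthetical claim that the Weil term is ``absorbed'' is exactly as justified as the paper's own one-line conclusion --- with this $K$ that term equals $q^{d+1}/2^{K/2}$, which dominates $q^{d+1}/2^{K}$, so the remark deserves the same scrutiny in both places.)
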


\begin{proof}
The trivial summand of~\eqref{eq:sums} can be bounded by $O(q^{d+1}/2^K)$.
For the other terms, we can use the Weil bound,
as is given in \cite[Lemma 1]{GomezNicolas10}, for those polynomials which
are nonsquares. Since these polynomials have degree at most 
$d^{K}$ in the indeterminate $a_0$ (see the proof of Lemma~\ref{lem:roots}),
we obtain $O(d^K q^{d+1/2})$ for this part.
For the rest, that is, the square polynomials, we can use the trivial bound.
Thus, from Lemma~\ref{lem:roots}, we get $O(d^{2K} q^{d})$.
Then,
\[
S_{d}=
O(q^{d+1}/2^{K}+d^Kq^{d+1/2}+
d^{2K}q^{d}).
\]
Choosing $K=\lceil(\log q/\log (2d^2))\rceil$ the result follows.
\end{proof}

%

\begin{thebibliography}{99}
\bibitem{A}  O. Ahmadi. `A note on stable quadratic polynomials over fields of characteristic two'. {\em{Preprint}}, 2010.
\bibitem{ALOS}
 O.~Ahmadi, F.~Luca, A.~Ostafe, \and I.~Shparlinski. `On
stable quadratic polynomials'. {\it Glasgow Math. J.\/}, (in press).

\bibitem{Ali}
 N. Ali. `Stalilit{\'e} des polyn{\^o}mes'. {\em Acta
Arithmetica}, 119:53--63, 2005.


\bibitem{Ayad}
 M. Ayad \and D. L. McQuillan. `Irreducibility of the
iterates of a quadratic polynomial over a field'. {\em Acta
Arithmetica}, 93(1):87--97, 2000.

\bibitem{Ber}
 E.~R. Berlekamp. {\em `Algebraic coding theory'}.
McGraw-Hill Book Co., New York, 1968.

\bibitem{Menezes93}
 I.~Blake, X.~Gao, A.~Menezes \and R.~Mullin. 
{\em `Application of finite fields'}. Kluwer, 1993.


\bibitem{Cox07}
 D.~Cox, J.~Little, \and D.~O'Shea. {\em `Ideals,
varieties, and algorithms. An introduction to computational
algebraic geometry and commutative
  algebra.'}. Undergraduate Texts in Mathematics. Springer, New York. 2007.

\bibitem{Danielson}
L.~Danielson \and B.~Fein. {`On the irreducibility of the
iterates of $x^{n}-b$.'} {\em Proc. Am. Math. Soc.},
130(6):1589--1596, 2002.

\bibitem{FS}
B.~Fein \and M.~Schacher. {`Properties of iterates and
composites of polynomials'}. {\em J. London Math. Soc.},
54(3):489--497, 1996.

\bibitem{Gathen99}
J. von~zur Gathen \and J. Gerhard. {\em `Modern computer
algebra'}. Cambridge University Press, 1999.

\bibitem{GomezNicolas10}
D.~G{\'o}mez \and A.~P. Nicol{\'a}s. `An estimate on the
number of stable quadratic polynomials'. {\em Finite Field and their
Applications}, 16(6):401--405, 2010.

\bibitem{Jon} R. Jones. {`The density of prime divisors in the arithmetic
dynamics of quadratic polynomials'}.
{\em J. Lond. Math. Soc.\/}, 78:523--544, 2008.

\bibitem{Jones09}
R.~Jones \and N.~Boston. `Settled polynomials over finite
fields'. {\em Accepted for publication in Proc. Amer. Math. Soc},
2011.

\bibitem{LN97}
R.~Lidl \and H.~Niederreiter. {\em `Finite fields'},
volume~20 of {\em Encyclopedia of Mathematics
  and its Applications}.
Cambridge University Press, Cambridge, 1997.


\bibitem{Ostafe09}
A.~Ostafe \and I.~Shparlinski. `On the length of critical
orbits of stable quadratic polynomials'. {\em Proc. Amer. Math.
Soc.}, 138(8):2653--2656, 2010.

\bibitem{Sti}
H. Stichtenoth. {\em `Algebraic function fields and codes'\/}.
Springer-Verlag, Berlin, 1993.

\bibitem{Stickelberger97}
L.~Stickelberger. `Uber eine neue eigenschaft der
diskriminanter algebraischer zahlkorper¡. {\em Verh. 1 Internat.
Math. Kongresses}, 1897.




\end{thebibliography}
\end{document}